\numberwithin{equation}{section}
\theoremstyle{plain}
\newtheorem{theorem}{Theorem}[section]
\newtheorem*{theorem*}{Theorem}
\newtheorem{lemma}[theorem]{Lemma}
\newtheorem{proposition}[theorem]{Proposition}
\newtheorem*{definition*}{Definition}
\newtheorem{corollary}[theorem]{Corollary}
\newcommand{\F}{\mathcal{F}}
\newcommand{\R}{{\mathbb R}}  \newcommand{\Z}{{\mathbb Z}} \newcommand{\N}{{\mathbb N}}
\newcommand{\C}{{\mathbb C}}
\newcommand{\diag}{\mathrm{diag}}
\newcommand{\vol}{\textnormal{vol}}
\definecolor{newgreen}{rgb}{0.518,0.741,0.0}
\title[A note on energy minimization]{A note on energy minimization in dimension 2}
\author[M.~Faulhuber]{Markus Faulhuber}
\author[I.~Shafkulovska]{Irina Shafkulovska}
\author[I.~Zlotnikov]{Ilia Zlotnikov}
\address{NuHAG, Faculty of Mathematics, University of Vienna\newline Oskar-Morgenstern-Platz 1, 1090 Vienna, Austria}
\date{}
\begin{document}

\begin{abstract}
    Proving the \textit{universal optimality of the hexagonal lattice} is one of the big open challenges of nowadays mathematics. We show that the hexagonal lattice outperforms certain ``natural" classes of periodic configurations. Also, we rule out the option that the canonical non-lattice rival -- the honeycomb -- has lower energy than the hexagonal lattice at any scale.
\end{abstract}

\subjclass[2020]{52C25, 74G65} 
\keywords{energy minimization, periodic configuration, universal optimality.}
\thanks{This work was supported by the Austrian Science Fund (FWF) grant P33217.}
% \vspace*{-1cm}
\maketitle

\section{Problems and results}
\subsection{Energy minimization and universal optimality}
For a periodic set $\Gamma$ in $\R^d$ and a potential function $p:\R_+ \to \R$ we consider the quantity 
\begin{equation}
    E_p(\Gamma) = \lim_{R \to \infty} \frac{1}{|\Gamma \cap B_R^d(0)|}\sum_{\substack{\gamma, \gamma' \in \Gamma \cap B_R^d(0) \\ \gamma \neq \gamma'}} p(|\gamma-\gamma'|),
\end{equation}
where $B_R^d(0)$ is the ball of radius $R$ centered at 0 in $\R^d$. The above quantity is the energy of $\Gamma$. The density of $\Gamma$  gives the average number of points per unit volume and is given by
\begin{equation}
    \rho(\Gamma) = \lim_{R \to \infty} \frac{|\Gamma \cap B_R^d(0)|}{\vol(B_R^d(0))}.
\end{equation}
Throughout this work, we will assume that $\rho(\Gamma)$ is fixed. 
As we consider periodic configurations, we can write $\Gamma$ as a finite union of translates of a lattice $\Lambda\subseteq\R^d$;
\begin{equation}\label{eq:periodic_config}
    \Gamma = \bigcup \limits_{j=1}^J \Lambda+x_j
    \quad \text{ with } \quad
    x_j-x_k\notin \Lambda \quad \text{for all } j\neq k.
\end{equation}
A lattice is always assumed to be full-rank. The energy is invariant under shifts;
\begin{equation}
    E_p(\Gamma) = E_p(\Gamma + x), \quad x \in \R^d.
\end{equation}
Due to the special structure of $\Gamma$ we can re-write the energy in the following way
\begin{equation}\label{eq:energy}
    E_p(\Gamma) = \frac{1}{J} \sum\limits_{j,k=1}^J\  \sum\limits_{\lambda\in\Lambda\setminus\{x_j-x_k\} } p(|\lambda+x_j-x_k|).
\end{equation}
The goal is to minimize \eqref{eq:energy} subject to $\Gamma$ (density fixed). In accordance with Cohn and Kumar \cite{CohKum07} (see also \cite{Coh-Via22}) we introduce \textit{universal optimality} in the following way.
\begin{definition*}[Universal optimality]
    Among all periodic configurations $\Gamma \subset \R^d$ of fixed density, we say $\Gamma_\alpha$ is a ground state of \eqref{eq:energy} if for a fixed $\alpha > 0$
    \begin{equation}
        E_{\phi_\alpha}(\Gamma_\alpha) \leq E_{\phi_\alpha}(\Gamma),
        \quad \text{ where } \quad
        \phi_\alpha(r) = e^{-\alpha r^2}, \, r \in \R_+.
    \end{equation}
    A periodic configuration $\Gamma_0$ is universally optimal if it is a ground state for all~$\alpha > 0$;
    \begin{equation}
        E_{\phi_\alpha}(\Gamma_0) \leq E_{\phi_\alpha}(\Gamma), \quad \forall \alpha > 0.
    \end{equation}
\end{definition*}

\subsection{Results}
In this work, we will prove that at all scales the hexagonal lattice $\Lambda_2$ has lower energy than the honeycomb structure. Given the fact that we already know that no other lattice can beat $\Lambda_2$ \cite{Mon88}, the honeycomb structure was the most natural rival. We will also exclude the class of tensor periodic structures having lower energy than $\Lambda_2$ by reducing the dimension of the problem and exploiting universal optimality of the scaled integers \cite{CohKum07}. Also, we pose the problem of distributing 3 points in the unit square (considered periodically) so that they minimize energy.
\begin{theorem}\label{thm:hex_vs_honeycomb}
    For any fixed $\alpha > 0$ and fixed density 1 (and hence any fixed density) the hexagonal lattice $\Lambda_2$ has a smaller energy than the honeycomb structure $\Gamma_\textnormal{\hexagon}$;
    \begin{equation}
        E_{\phi_\alpha}(\Lambda_2) < E_{\phi_\alpha}(\Gamma_\textnormal{\hexagon}), \quad \forall \alpha > 0.
    \end{equation}
\end{theorem}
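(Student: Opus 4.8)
The plan is to collapse the two-lattice comparison into a single inequality for the hexagonal theta function evaluated at three scales, and then to prove that inequality by marrying its modular self-duality to an elementary sign analysis.

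\emph{Step 1: rewrite both energies through lattice theta functions.} Set $\theta_{\Lambda_2}(\alpha)=\sum_{\lambda\in\Lambda_2}e^{-\alpha|\lambda|^2}$ and $\theta_{\Lambda_2}(\alpha;w)=\sum_{\lambda\in\Lambda_2}e^{-\alpha|\lambda+w|^2}$. Then \eqref{eq:energy} gives $E_{\phi_\alpha}(\Lambda_2)=\theta_{\Lambda_2}(\alpha)-1$. The honeycomb is a union of two triangular sublattices, so taking the honeycomb lattice to be $\sqrt2\,\Lambda_2$ (covolume $2$, matching density $1$) and $w$ the deep-hole (centroid) offset of $\Lambda_2$ (so the honeycomb offset is $\sqrt2\,w$), the scalings $\theta_{\sqrt2\Lambda_2}(\alpha;\cdot)=\theta_{\Lambda_2}(2\alpha;\cdot)$ yield $E_{\phi_\alpha}(\Gamma_\textnormal{\hexagon})=\theta_{\Lambda_2}(2\alpha)-1+\theta_{\Lambda_2}(2\alpha;w)$.

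\emph{Step 2: eliminate the shifted theta.} Since $3w\in\Lambda_2$ and $\Lambda_2\cup(\Lambda_2+w)\cup(\Lambda_2+2w)$ is the index-$3$ triangular superlattice of covolume $1/3$ (a rotated, $1/\sqrt3$-scaled copy of $\Lambda_2$), I would combine $\theta_{\Lambda_2}(\beta;w)=\theta_{\Lambda_2}(\beta;2w)$ with this decomposition to obtain $\theta_{\Lambda_2}(\beta/3)=\theta_{\Lambda_2}(\beta)+2\,\theta_{\Lambda_2}(\beta;w)$. Substituting $\beta=2\alpha$ turns $E_{\phi_\alpha}(\Lambda_2)<E_{\phi_\alpha}(\Gamma_\textnormal{\hexagon})$ into the scale inequality
\[
 2\,\theta_{\Lambda_2}(\alpha)<\theta_{\Lambda_2}(2\alpha)+\theta_{\Lambda_2}\!\left(\tfrac{2\alpha}{3}\right),\qquad \alpha>0,
\]
which I will call $(\star)$.

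\emph{Step 3: prove $(\star)$.} This is the crux, and it is genuinely delicate: both sides agree to leading order as $\alpha\to0$ (via $\theta_{\Lambda_2}(\alpha)\sim\pi/\alpha$) and as $\alpha\to\infty$ (both tend to $2$), so no convexity or log-convexity estimate can succeed, since the naturally averaged argument overshoots $\alpha$. My plan is to split the range using the self-duality $\theta_{\Lambda_2}(\alpha)=\tfrac{\pi}{\alpha}\theta_{\Lambda_2}(\pi^2/\alpha)$. Expanding $F(\alpha):=\theta_{\Lambda_2}(2\alpha)+\theta_{\Lambda_2}(2\alpha/3)-2\theta_{\Lambda_2}(\alpha)=\sum_{k\ge1}r_k\,h(\alpha d_k)$ over the distance shells $d_k$ with multiplicities $r_k$, the profile factorizes as $h(x)=e^{-2x/3}+e^{-2x}-2e^{-x}=w^2(w-1)(w^3+w^2+w-1)$ with $w=e^{-x/3}$, so $h\ge0$ exactly when $w$ lies below the real root of $w^3+w^2+w-1$; in particular $h\ge0$ for $x\ge2$. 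As the smallest shell has $d_1=2/\sqrt3$, this gives $F(\alpha)>0$ termwise for $\alpha\ge\sqrt3$. For the complementary range I would apply the functional equation to each term, getting $F(\alpha)=\tfrac{\beta}{2\pi}\sum_{k\ge1}r_k\,\tilde h(\beta d_k)$ with $\beta=\pi^2/\alpha$ and $\tilde h(y)=e^{-y/2}+3e^{-3y/2}-4e^{-y}=u(3u-1)(u-1)$, $u=e^{-y/2}$; here $\tilde h\ge0$ precisely for $y\ge2\ln3$, so $F(\alpha)>0$ termwise whenever $\beta d_1\ge2\ln3$, i.e. for $\alpha\le\pi^2/(\sqrt3\,\ln3)$. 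Since $\sqrt3<\pi^2/(\sqrt3\,\ln3)$, the two ranges overlap and cover all of $(0,\infty)$, proving $(\star)$ and hence the theorem. The single real obstacle is this last step: neither half-line admits a termwise sign argument by itself (both profiles change sign), and it is exactly the modular split at the self-dual scale that rescues positivity, so the substantive work is checking the two elementary factorizations and the numerical overlap of the thresholds.
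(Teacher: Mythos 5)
Your proposal is correct and takes essentially the same route as the paper's own proof: your inequality $(\star)$ is exactly the paper's \eqref{eq:goal1}, your dual inequality is \eqref{eq:goal2}, and your two overlapping termwise sign analyses (with profiles $h=-2g$ and $\tilde h = -4h$ in the notation \eqref{eq:g}--\eqref{eq:h}, and identical polynomial factorizations) reproduce Lemma \ref{lemma:aux_estimate} and the paper's Case 1/Case 2 split at the self-dual scale. The only cosmetic differences are that you derive the tripling identity $\theta_{\Lambda_2}(\beta/3)=\theta_{\Lambda_2}(\beta)+2\theta_{\Lambda_2}(\beta;w)$ directly from the index-$3$ superlattice decomposition instead of via symplectic Poisson summation and the Borwein cubic AGM, and your threshold $2\ln 3$ for $\tilde h$ is sharper than the paper's crude bound $6$; neither affects the structure of the argument.
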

\begin{theorem}\label{thm:tensor}
    We denote the Cartesian tensor of 1-$d$ periodic configurations by
    \begin{equation}\label{eq:Cartesian_thm}
        \Gamma_\times = \bigtimes\limits_{l=1}^d\Gamma_l,
        \quad \Gamma_l \text{ a periodic configuration in } \R.
    \end{equation}
    Denote by $\Gamma_\otimes$ the minimizing configuration of the energy among periodic configurations of type \eqref{eq:Cartesian_thm}. Then, for any fixed $\alpha > 0$, we have
    $
        E_{\phi_\alpha}(\Gamma_\otimes) \leq E_{\phi_\alpha}(\Gamma_\times)
    $
    with equality if and only if $\Gamma_l$ is equispaced for all $l=1, \ldots, d$. In other words, $\Gamma_\otimes$ is a (cuboid-shaped) lattice. Moreover, for any fixed density $\rho > 0$, the cubic lattice $\rho^{-1/d} \Z^d$ is universally optimal among all periodic configurations of type \eqref{eq:Cartesian_thm}.
\end{theorem}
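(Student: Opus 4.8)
The plan rests on the multiplicativity of the Gaussian under the Cartesian product. Writing a point of $\R^d$ as $\gamma=(\gamma^{(1)},\dots,\gamma^{(d)})$, one has $\phi_\alpha(|\gamma|)=\prod_{l=1}^d\phi_\alpha(|\gamma^{(l)}|)$. It is convenient to pass from the energy to the associated lattice sum that also keeps the zero-distance terms: for a periodic $\Gamma$ written as in \eqref{eq:periodic_config} set
\begin{equation}
    \Theta_\Gamma(\alpha)=\frac1J\sum_{j,k=1}^J\sum_{\lambda\in\Lambda}e^{-\alpha|\lambda+x_j-x_k|^2}.
\end{equation}
Since $x_j-x_k\notin\Lambda$ for $j\neq k$, the only terms of $\Theta_\Gamma$ absent from $E_{\phi_\alpha}(\Gamma)$ are the $J$ self-terms $j=k$, $\lambda=0$, so $\Theta_\Gamma(\alpha)=E_{\phi_\alpha}(\Gamma)+1$, and minimizing $E_{\phi_\alpha}$ is the same as minimizing $\Theta$. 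First I would verify that for a Cartesian product $\Gamma_\times=\bigtimes_{l=1}^d\Gamma_l$, whose underlying lattice and family of shifts are themselves products, the sum $\Theta$ factorizes coordinatewise,
\begin{equation}
    \Theta_{\Gamma_\times}(\alpha)=\prod_{l=1}^d\Theta_{\Gamma_l}(\alpha),
\end{equation}
each factor being a one-dimensional theta sum, and in particular $\Theta_{\Gamma_l}(\alpha)>1>0$.

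The first assertion then follows by optimizing one factor at a time. As all factors are positive and depend on the $\Gamma_l$ independently, the product is minimized precisely when every factor is, and this decouples into $d$ one-dimensional problems at the respective densities $\rho_1,\dots,\rho_d$ of the $\Gamma_l$. By the universal optimality of the scaled integers \cite{CohKum07}, for each fixed $\alpha>0$ the quantity $\Theta_{\Gamma_l}(\alpha)$ is minimized, among all one-dimensional periodic configurations of density $\rho_l$, exactly by the equispaced lattice $\rho_l^{-1}\Z$, with strict inequality otherwise. Hence $E_{\phi_\alpha}(\Gamma_\otimes)\le E_{\phi_\alpha}(\Gamma_\times)$ with equality iff each $\Gamma_l$ is equispaced, i.e.\ iff $\Gamma_\otimes$ is the cuboid lattice $\bigtimes_l\rho_l^{-1}\Z$.

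For the universal optimality of the cube I would optimize over the way the total density is split among the factors. Writing the equispaced factors as $c_l\Z$ with $\prod_l c_l=\rho^{-1}$ and setting $a_l=\alpha c_l^2$, the constraint becomes $\prod_l a_l=(\alpha\rho^{-2/d})^d$ and
\begin{equation}
    \Theta_{\Gamma_\otimes}(\alpha)=\prod_{l=1}^d\theta(a_l),\qquad \theta(a)=\sum_{n\in\Z}e^{-an^2}.
\end{equation}
The cube $\rho^{-1/d}\Z^d$ corresponds to taking all $a_l$ equal to their geometric mean $\bar a=(\prod_l a_l)^{1/d}$; crucially, the parameter $\alpha$ has been completely absorbed into the $a_l$, so universal optimality of the cube is \emph{equivalent} to the single inequality $\prod_{l}\theta(a_l)\ge\theta(\bar a)^d$ for all positive $a_1,\dots,a_d$. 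By Jensen's inequality this holds for every choice of the $a_l$ (hence for every $\alpha$) if and only if $x\mapsto\log\theta(e^x)$ is convex on $\R$, which is the statement I would establish last.

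The convexity of $x\mapsto\log\theta(e^x)$ is the main obstacle, and it is genuinely delicate: the naive ``sum of log-convex functions'' argument fails, because each summand $e^{-e^x n^2}$ is log-\emph{concave} in $x$. Introducing the probability weights $p_n\propto e^{-sn^2}$ with $s=e^x$, a short computation reduces $\tfrac{d^2}{dx^2}\log\theta(e^x)\ge0$ to the moment inequality $s\,\mathrm{Var}_s(n^2)\ge \mathbb{E}_s(n^2)$. The two sides share the same leading asymptotics $\tfrac{1}{2s}$ as $s\to0$, so the inequality is tight in that regime and cannot be obtained by crude bounds; I would control the small-$s$ range through the Jacobi modular transformation $\theta(s)=\sqrt{\pi/s}\,\theta(\pi^2/s)$ and treat the large-$s$ range by the rapidly converging series directly (or else invoke the known log-convexity of the Jacobi theta function). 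Once this one-dimensional convexity is in place, the three displays above assemble into both parts of the theorem.
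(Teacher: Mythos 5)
Your proposal is correct and takes essentially the same route as the paper: factor the energy-plus-one into a product of one-dimensional theta sums (your $\Theta_\Gamma=E_{\phi_\alpha}(\Gamma)+1$ bookkeeping is in fact cleaner than the paper's displayed manipulation, which is sloppy about the excluded $\lambda=0$ terms), apply the Cohn--Kumar result coordinatewise to force each factor to be equispaced, and then minimize the resulting product of one-dimensional theta functions over the splitting of the density. The only divergence is the final step: the paper settles it by citing Montgomery (see also B\'etermin--Petrache and Faulhuber--Steinerberger) for the fact that $\prod_{j}\theta_3\left(0;\tfrac{\alpha\beta_j^2}{\pi}\right)$ with $\prod_j\beta_j$ fixed is minimized exactly at equal $\beta_j$, whereas you correctly reformulate this as Jensen's inequality for the log-convexity of $x\mapsto\log\theta(e^x)$ --- which is precisely the content of those references --- so invoking the known result there, as you offer to do, leaves no gap even though your sketched direct proof of that convexity is not carried out.
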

\begin{corollary}\label{cor:2d}
    In dimension 2, consider the set of lattices, the set of Cartesian tensor configurations and the set of unions of two shifted rectangular or hexagonal lattices. We denote these sets, respectively, by
    \begin{align}
        \mathbf{\Lambda} & = \{\Lambda \subset \R^2 \mid \Lambda \text{ a lattice}\}, \qquad
        \mathbf{\Gamma}_\times = \{ \Gamma \subset \R^2 \mid \Gamma \text{ as in } \eqref{eq:Cartesian_thm} \}\\
        \mathbf{\Gamma}_x & = \{\Gamma \subset \R^2 \mid \Gamma = \Lambda \cup (\Lambda+x), \, \Lambda \in \{(a \Z \times a^{-1} \Z), \, \Lambda_2\}, \, a>0, x \in \R^2 \},
    \end{align}
    We denote the union of these sets by
    \begin{equation}\label{eq:set_Xi}
        \mathbf{\Xi} = \mathbf{\Lambda} \cup \mathbf{\Gamma}_\times \cup \mathbf{\Gamma}_x.
    \end{equation}
    For any fixed $\alpha > 0$ and for any fixed density $\rho > 0$, the hexagonal lattice $\rho^{-1/2} \Lambda_2$ is the unique minimizer among periodic configurations from the set $\mathbf{\Xi}$;
    \begin{equation}
        E_{\phi_\alpha}(\rho^{-1/2} \Lambda_2) \leq E_{\phi_\alpha}(\Gamma), \quad \Gamma \in \mathbf{\Xi}.
    \end{equation}
    In other words, the hexagonal lattice $\Lambda_2$ is universally optimal in $\mathbf{\Xi}$.
\end{corollary}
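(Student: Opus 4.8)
The plan is to exploit the decomposition $\mathbf{\Xi} = \mathbf{\Lambda} \cup \mathbf{\Gamma}_\times \cup \mathbf{\Gamma}_x$ and to bound the energy of $\rho^{-1/2}\Lambda_2$ against each family separately. First I would record the standard scaling reduction: for the Gaussian potential one has $E_{\phi_\alpha}(c\,\Gamma) = E_{\phi_{c^2 \alpha}}(\Gamma)$ while $\rho(c\,\Gamma) = c^{-2}\rho(\Gamma)$, so proving the inequality for \emph{all} $\alpha > 0$ at one convenient density is equivalent to proving it at every density $\rho > 0$. Thus it suffices to compare all configurations at a single fixed density and to treat the three families in turn. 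Throughout I write $\Theta_\Lambda(\alpha) = \sum_{\lambda \in \Lambda} e^{-\alpha|\lambda|^2}$ for the lattice theta function, so that $E_{\phi_\alpha}(\Lambda) = \Theta_\Lambda(\alpha) - 1$.

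The family $\mathbf{\Lambda}$ is immediate: Montgomery's theorem \cite{Mon88} states precisely that $\Lambda_2$ has the smallest Gaussian energy among all lattices of a given density, for every $\alpha > 0$. For $\mathbf{\Gamma}_\times$ I would chain two inputs. By Theorem~\ref{thm:tensor} the minimizer of $E_{\phi_\alpha}$ over Cartesian tensor configurations of the fixed density is the square lattice $\rho^{-1/2}\Z^2$, for every $\alpha > 0$; but the square lattice is itself an element of $\mathbf{\Lambda}$, so Montgomery's theorem gives $E_{\phi_\alpha}(\rho^{-1/2}\Lambda_2) \le E_{\phi_\alpha}(\rho^{-1/2}\Z^2) \le E_{\phi_\alpha}(\Gamma)$ for all $\Gamma \in \mathbf{\Gamma}_\times$. (Note $\Lambda_2 \notin \mathbf{\Gamma}_\times$, but this does not matter, since we only use that a lattice beats the best tensor configuration.)

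The family $\mathbf{\Gamma}_x$ is the crux. For $\Gamma = \Lambda \cup (\Lambda + x)$ with $\Lambda$ centrally symmetric, the energy formula \eqref{eq:energy} with $J = 2$ collapses, after using $\sum_\lambda \phi_\alpha(|\lambda - x|) = \sum_\lambda \phi_\alpha(|\lambda + x|)$, to
\begin{equation}
E_{\phi_\alpha}\big(\Lambda \cup (\Lambda + x)\big) = \big(\Theta_\Lambda(\alpha) - 1\big) + \Theta_{\Lambda + x}(\alpha), \qquad \Theta_{\Lambda + x}(\alpha) = \sum_{\lambda \in \Lambda} e^{-\alpha|\lambda + x|^2}.
\end{equation}
Since the first term is independent of $x$, minimizing the energy over the shift amounts to minimizing the shifted theta function $\Theta_{\Lambda + x}(\alpha)$. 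For the rectangular lattice $\Lambda = a\Z \times a^{-1}\Z$ this factorizes, $\Theta_{\Lambda + x}(\alpha) = \big(\sum_m e^{-\alpha(am + x_1)^2}\big)\big(\sum_n e^{-\alpha(n/a + x_2)^2}\big)$, and each one-dimensional factor is minimized at the midpoint of its period by the classical Jacobi theta inequality (via Poisson summation, the alternating sign pattern at the half-period). Hence the optimal shift is the cell centre $x_0 = (a/2,\, 1/(2a))$; since $2 x_0 \in \Lambda$, the resulting $\Gamma = \Lambda \cup (\Lambda + x_0)$ is again a \emph{lattice} (the centred rectangular lattice), and is therefore beaten by $\Lambda_2$ through Montgomery's theorem. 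Every non-central shift yields a strictly larger $\Theta_{\Lambda+x}(\alpha)$, hence larger energy, so $\Lambda_2$ beats the entire rectangular part of $\mathbf{\Gamma}_x$. For the hexagonal lattice $\Lambda = \Lambda_2$, the minimizing shift is the deep hole $x_0$ (the barycentre of a fundamental triangle), and $\Lambda_2 \cup (\Lambda_2 + x_0)$ is exactly the honeycomb $\Gamma_{\textnormal{\hexagon}}$; Theorem~\ref{thm:hex_vs_honeycomb} then yields $E_{\phi_\alpha}(\Lambda_2) < E_{\phi_\alpha}(\Gamma_{\textnormal{\hexagon}}) \le E_{\phi_\alpha}(\Lambda_2 \cup (\Lambda_2 + x))$ for every $x$, completing this case and the corollary.

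The main obstacle is the claim used in the last case: that for the hexagonal lattice the deep hole minimizes $x \mapsto \Theta_{\Lambda_2 + x}(\alpha)$ for \emph{every} $\alpha > 0$, so that the honeycomb is genuinely the worst-case rival within the hexagonal shift family (this is what lets Theorem~\ref{thm:hex_vs_honeycomb} do the work for all shifts at once). Unlike the rectangular case the shifted theta function does not factorize. I would approach it through the Poisson-summation representation $\Theta_{\Lambda_2 + x}(\alpha) = \tfrac{1}{|\Lambda_2|}(\pi/\alpha)\sum_{\xi \in \Lambda_2^*} e^{-\pi^2|\xi|^2/\alpha}\cos(2\pi \xi \cdot x)$, noting that at the deep hole the contributions of the shortest dual vectors are simultaneously as negative as the hexagonal symmetry allows, while at a lattice point they are all maximal; combined with the three-fold rotational symmetry about the deep hole, which forces the gradient to vanish there, this should pin down the deep hole as the global minimizer for all $\alpha$. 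Alternatively, one may invoke known shift-optimality results for theta functions of the triangular lattice. The remaining verifications (the one-dimensional Jacobi inequality and the identification of the centred and honeycomb configurations) are routine.
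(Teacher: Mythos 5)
Your proposal follows the paper's own proof almost step for step: the same three-way decomposition of $\mathbf{\Xi}$, Montgomery's theorem for $\mathbf{\Lambda}$, Theorem~\ref{thm:tensor} combined with Montgomery for $\mathbf{\Gamma}_\times$, the shift-energy formula $E(\Lambda\cup(\Lambda+x)) = E(\Lambda) + f_\Lambda(x)$ with half-period minimization of the factored one-dimensional theta functions for rectangular unions (this is Proposition~\ref{pro:energy_2_lattices} in the paper, including your observation that the optimal shift produces a centred rectangular lattice, which Montgomery then beats), and finally the honeycomb-as-optimal-shift argument plus Theorem~\ref{thm:hex_vs_honeycomb} for hexagonal unions. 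The one place where your write-up falls short of a proof is precisely the crux you identify yourself: your primary argument that the deep hole minimizes $x \mapsto f_{\Lambda_2}(x;\alpha)$ for every $\alpha$ --- three-fold symmetry forcing the gradient to vanish there, plus the observation that the shortest dual vectors contribute ``as negatively as possible'' --- only establishes that the deep hole is a critical point with favourable leading terms; it does not control the contributions of all dual-lattice vectors uniformly in $\alpha$, and global minimality of the deep hole is a genuinely nontrivial fact. The paper closes exactly this step by citing Baernstein's theorem on the minimum of the heat kernel on the hexagonal flat torus \cite{Bae97} (packaged as Proposition~\ref{pro:honeycomb}), which is the ``known shift-optimality result for theta functions of the triangular lattice'' you offer as an alternative. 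With that citation substituted for your heuristic, your argument is complete and coincides with the paper's.
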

\noindent
\textbf{Remark}. We also derive a minimality result for the hexagonal lattice in the set
\begin{align}
    \mathbf{\Gamma}_{x,x^*} & = \{ \Gamma \subset \R^2 \mid \Gamma = \Lambda \cup (\Lambda + x) \cup (\Lambda+x^*), \\
    & \hspace{60pt} x \in [0,1]^2, x^* = (1,1)-x, \, \Lambda \in \{\Z^2, \Lambda_2 \} \}. 
\end{align}
For $\Lambda_2$ the result is not too interesting as the optimal configuration is a scaled version of $\Lambda_2$. For $\Z^2$, we prove that the pair $x_0=(1/3,1/3)$ and $x_0^*=(2/3,2/3)$ as well as the pair $\widetilde{x}_0=(1/3,2/3)$ and $\widetilde{x}_0^*=(2/3,1/3)$ are always critical points.

The optimality of $\Lambda_2$ among lattices already follows from the seminal work of Montgomery~\cite{Mon88}. Corollary \ref{cor:2d} provides more evidence for the widely conjectured universal optimality of the hexagonal lattice $\Lambda_2$. Recently, it was noted by Coulangeon and Sch\"urmann \cite{CouSch22} that their proof of the hexagonal lattice being a local minimizer within the set of periodic configurations \cite{CouSch12} unfortunately contains a non-repairable flaw. There is practically no doubt on the correctness of their statement, after all the hexagonal lattice is conjectured to be the unique \textit{global} minimizer among periodic configurations. However, to the best of our knowledge, the status of universal optimality of the hexagonal lattice has been set back to the result of Montgomery \cite{Mon88} from 1988. In this note, we now extend the currently known result by enlarging the set of configurations within which the hexagonal lattice is optimal.

Universal optimality is a rare property and so far it was only proved to exist in dimensions 1, 8, and 24. Moreover, it provably fails for lattices in dimensions 3, 5, 6, and 7 (cf.\ \cite[Sec.~9]{CohKum07}). The reason is quite simple to state: to be universally optimal a lattice needs to solve the sphere packing problem, which can be derived as a limiting case of energy minimization. Among lattices, the optimal sphere packing solutions in the named dimensions are known \cite[Table~1.1]{ConSlo98}. By the Poisson summation formula, one can then prove that if a lattice is optimal for a parameter $\alpha$ (density fixed to 1), then its dual lattice is optimal for the parameter $1/\alpha$. As none of the optimal sphere packing lattices in the named dimensions are self-dual (up to scaling), they cannot be universally optimal. For a general potential, the circumstance that at different densities different configurations are energy-minimizing is called \textit{phase transition}. Now, universal optimality includes the statement that phase transition cannot happen for completely monotone potentials.

The integer lattice $\Lambda_1$, the hexagonal lattice $\Lambda_2$ ($\mathsf{A}_2$-root lattice), the $\mathsf{D}_4$ root lattice $\Lambda_4$, the $\mathsf{E}_8$ root lattice $\Lambda_8$ and the Leech lattice $\Lambda_{24}$ are self-dual (up to scaling). Here, we used the $\Lambda_d$ notation of \cite[Table 1.2]{ConSlo98}. It was proven by Cohn and Kumar that the (scaled) integer lattice is universally optimal in dimension 1 \cite{CohKum07}. Recently, Cohn, Kumar, Miller, Radchenko, and Viazovska \cite{Coh-Via22} proved universal optimality of $\Lambda_8$ and $\Lambda_{24}$. In dimension 4, numerical evidence was provided by Cohn, Kumar, and Sch\"urmann \cite{CohKumSch09} that $\Lambda_4$ may be universally optimal as well.

\section{Notation}
We are interested in the case when the potential is a Gaussian of width~$\alpha$, i.e.,
\begin{equation}
    \phi_\alpha (r) = e^{-\alpha r^2}, \quad r \in \R_+, \, \alpha>0.
\end{equation}
The Fourier transform of a (suitable) function $f$ is given by
\begin{equation}
    \F f(\omega) = \int_{\R^d} f(t) e^{-2 \pi i \omega \cdot t} \, dt.
\end{equation}
The dot $\cdot$ denotes the Euclidean inner product of two vectors (always seen as column vectors). The Poisson summation formula for a lattice $\Lambda$ with dual lattice $\Lambda^\perp$ reads 
\begin{equation}
    \sum_{\lambda \in \Lambda} f(\lambda + x) = \frac{1}{\vol(\R^d/\Lambda)} \sum_{\lambda^\perp \in \Lambda^\perp} \F f(\lambda^\perp) e^{2 \pi i \lambda^\perp \cdot x}.
\end{equation}
Recall that the behavior of the Gaussian under the Fourier transform is
\begin{equation}
    \F \phi_{\pi \alpha} = \alpha^{-1/2} \phi_{\pi/\alpha}.
\end{equation}
Since the additional factor $\pi$ is most often not relevant for our results we generally omit it. When it becomes important in a specific calculation we will explicitly write it. Moreover, fixing the density to 1 is not a restriction and we may choose any other fixed density. For brevity, we demonstrate this for a lattice $\Lambda = \rho^{-d/2} \Lambda_0 \subset \R^d$ of density $\rho$, where $\Lambda_0$ has density 1. For all $\alpha > 0$ we have
\begin{equation}\label{eq:density}
    E_{\phi_\alpha}(\Lambda) = E_{\phi_\alpha}(\rho^{-d/2} \Lambda_0) = \sum_{\lambda \in \rho^{-d/2} \Lambda_0} e^{-\alpha |\lambda|^2} = \sum_{\lambda \in \Lambda_0} e^{-\frac{\alpha}{\rho^d} |\lambda|^2} = E_{\phi_{\alpha/\rho^d}}(\Lambda_0).
\end{equation}
As the results shall hold for all $\alpha > 0$, we can choose any density we like and hide it in the exponent of the Gaussian. So, if a result holds for one fixed density and all $\alpha$, then it already holds for all densities (and all $\alpha$). In the sequel, we set $E(\Gamma) = E_{\phi_\alpha}(\Gamma)$ to simplify notation.
Moreover, we will identify configurations which only differ by rotation or reflection (as they have the same energy). We write
\begin{equation}
    \Gamma \cong \widetilde{\Gamma}
    \quad \text{ if there exists an orthogonal matrix $\mathcal{Q}$ such that } \quad
    \Gamma = \mathcal{Q} \, \widetilde{\Gamma}.
\end{equation}
So, most often, explicit co-ordinates are not relevant but for some specific situations picking a certain lattice basis simplifies calculations. To this end, we note that we prefer the hexagonal lattice to be written as (recall that we use column vectors)
\begin{equation}
    \Lambda_2 = \sqrt{\tfrac{2}{\sqrt{3}}}
    \begin{pmatrix}
        1 & \frac{1}{2}\\[.6ex]
        0 & \frac{\sqrt{3}}{2}
    \end{pmatrix} \Z^2.
\end{equation}

\section{Auxiliary results}
\subsection{Jacobi theta function}\label{sec:theta}
We will need a result on Jacobi theta functions in the sequel. For argument $z \in \C$ and parameter $\tau \in \mathbb{H} = \{z \in \C \mid \Im(z) > 0 \}$, denote the classical Jacobi theta-3 function by
\begin{equation}
    \vartheta_3(z;\tau) = \sum_{k \in \Z} e^{\pi i \tau k^2} e^{2 \pi i k z}.
\end{equation}
It has a product representation known as the Jacobi triple product \cite{SteSha03_Complex}, \cite{WhiWat69};
\begin{align}
    \vartheta_3(z;\tau)
    & = \prod_{k \geq 1} \left(1 - e^{2 k \pi i \tau} \right) \left(1 + e^{(2k-1)\pi i \tau} e^{2 \pi i z} \right) \left(1 + e^{(2k-1)\pi i \tau} e^{-2 \pi i z} \right)\\
    & = \prod_{k \geq 1} \left(1 - e^{2 k \pi i \tau} \right) \left(1 + 2 e^{(2k-1)\pi i \tau} \cos(2 \pi z) + e^{2(2k-1)\pi i \tau} \right).
\end{align}
As we will only need the real-valued version of $\vartheta_3$, we set
\begin{equation}\label{eq:theta_3}
    \theta_3(x,\alpha) = \vartheta_3(x,i \alpha), \quad x \in \R, \, \alpha > 0.
\end{equation}
\begin{lemma}\label{lem:theta_triple}
    Let $\alpha > 0$ be fixed. Then,
    \begin{equation}
        \theta_3(0;\alpha) \geq \theta_3(x;\alpha) \geq \theta_3(\tfrac{1}{2};\alpha),
    \end{equation}
    with equality if and only if $x \in \Z$ for the first and $x \in \Z + \frac{1}{2}$ for the second inequality.
\end{lemma}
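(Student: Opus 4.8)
The plan is to reduce both inequalities to a single monotonicity statement in the variable $c = \cos(2\pi x)$. Writing out the definition with $\tau = i\alpha$ gives the real cosine series
\begin{equation}
    \theta_3(x;\alpha) = \sum_{k \in \Z} e^{-\pi \alpha k^2} e^{2\pi i k x} = 1 + 2 \sum_{k \geq 1} e^{-\pi \alpha k^2} \cos(2\pi k x),
\end{equation}
which is even and $1$-periodic in $x$. The upper bound is then immediate: since $e^{-\pi \alpha k^2} > 0$ and $\cos(2\pi k x) \leq 1$ for every $k$, each term is bounded by its value at $x = 0$, and summing yields $\theta_3(x;\alpha) \leq \theta_3(0;\alpha)$. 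Equality forces $\cos(2\pi k x) = 1$ for all $k \geq 1$; taking $k = 1$ gives $x \in \Z$, which conversely is clearly sufficient. The lower bound, however, cannot be obtained termwise: the inequality one would need, $\cos(2\pi k x) \geq (-1)^k$, already fails for $k = 2$ and generic $x$, so a genuinely different argument is required, and this is where the main work lies.

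To handle the lower bound I would invoke the Jacobi triple product stated above. Setting $q = e^{-\pi \alpha} \in (0,1)$, the product representation with $\tau = i\alpha$ reads
\begin{equation}
    \theta_3(x;\alpha) = \left( \prod_{k \geq 1} (1 - q^{2k}) \right) \prod_{k \geq 1} \left( 1 + 2 q^{2k-1} \cos(2\pi x) + q^{2(2k-1)} \right).
\end{equation}
The leading factor is a positive constant independent of $x$. Abbreviating $c = \cos(2\pi x) \in [-1, 1]$, each remaining factor
\begin{equation}
    f_k(c) = 1 + 2 q^{2k-1} c + q^{2(2k-1)}
\end{equation}
is affine in $c$ with strictly positive slope $2 q^{2k-1}$ and is strictly positive on $[-1,1]$, since its minimal value is $f_k(-1) = (1 - q^{2k-1})^2 > 0$. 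Thus each $f_k$ is a positive, strictly increasing function of $c$ on $[-1,1]$.

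The final step is the monotonicity of the infinite product. For $-1 \leq c_1 < c_2 \leq 1$ every ratio $f_k(c_2)/f_k(c_1)$ exceeds $1$, so the partial products are strictly increasing in $c$; passing to the limit (the product converges since $q < 1$) gives $\prod_k f_k(c_1) \leq \prod_k f_k(c_2)$, with strictness secured already by the single factor $f_1(c_2)/f_1(c_1) > 1$. Hence $\theta_3(x;\alpha)$ is a strictly increasing function of $c = \cos(2\pi x)$, so it attains its maximum exactly when $c = 1$ (that is, $x \in \Z$) and its minimum exactly when $c = -1$ (that is, $x \in \Z + \tfrac{1}{2}$). This simultaneously recovers the easy upper bound and delivers the desired lower bound together with both equality characterizations. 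The conceptual crux — and the only nonobvious point — is recognizing that the triple product collapses the dependence on $x$ into the single monotone variable $\cos(2\pi x)$, converting the non-termwise lower bound into an elementary monotonicity argument.
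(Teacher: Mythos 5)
Your proof is correct and takes essentially the same approach as the paper: the paper's one-line argument likewise invokes the Jacobi triple product and the fact that $\cos(2\pi x)$ attains its maximum on $\Z$ and its minimum on $\Z + \tfrac{1}{2}$. Your write-up simply fills in the details the paper leaves implicit (positivity of each factor, strict monotonicity of the product in $\cos(2\pi x)$, and the equality characterizations).
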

\begin{proof}
    The proof follows easily from the product representation and the fact that $\cos(2 \pi x)$ assumes its maximum for $x \in \Z$ and its minimum for $x \in \Z + \frac{1}{2}$.
\end{proof}
This fact has already been noted and used by Janssen in his work on Gabor frame bounds~\cite{Jan96}. A finer analysis of the behavior of $\theta_3(x;\alpha)$ has been carried out by Montgomery \cite{Mon88}.

\subsection{The result of Cohn and Kumar}

In dimension 1, the considered energy minimization problem was completely settled by Cohn and Kumar \cite{CohKum07}. Their result shows that at any density the scaled integers are universally optimal in dimension~1.
\begin{theorem}[Cohn and Kumar, 2007]\label{thm:Cohn-Kumar}
    Among all periodic configurations of the form $\Gamma = \bigcup_{k=1}^N \delta\Z + x_k$ of fixed density, with $x_k \in [0;\delta)$, $x_k \neq x_j$ for $k \neq j$
    \begin{equation}
        E(\Gamma) = \frac{1}{N} \, \sum_{j,k=1}^N \sum_{\ell \in \Z} \phi_\alpha(|\ell+x_k-x_j|)
    \end{equation}
     is minimal if and only if $x_k = \frac{k-1}{N}+x$ for a global shift $x \in \R$, i.e., $\Gamma$ is equispaced.
\end{theorem}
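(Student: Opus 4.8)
The plan is to pass to the Fourier side and reduce the problem to a positivity/linear-programming statement. Normalizing the common lattice to $\Z$ (so the $N$ points $x_1,\dots,x_N$ lie in $[0,1)$, matching the excerpt), I would first periodize the potential: set $F(t)=\sum_{\ell\in\Z}\phi_\alpha(\ell+t)$, a $1$-periodic function, and rewrite the energy as
\begin{equation}
    E(\Gamma)=\frac{1}{N}\sum_{j,k=1}^{N}F(x_k-x_j).
\end{equation}
By the Poisson summation formula together with $\F\phi_\alpha=\sqrt{\pi/\alpha}\,\phi_{\pi^2/\alpha}$, the Fourier coefficients of $F$ are $c_m=\sqrt{\pi/\alpha}\,e^{-\pi^2m^2/\alpha}$, which are strictly positive; up to normalization $F$ is a value of $\theta_3$. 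Expanding the double sum then gives
\begin{equation}
    E(\Gamma)=\frac{1}{N}\sum_{m\in\Z}c_m\,\bigl|S_m\bigr|^2,\qquad S_m:=\sum_{j=1}^{N}e^{2\pi i m x_j}.
\end{equation}
For the equispaced configuration $x_j=\tfrac{j-1}{N}+x$ one computes $S_m=N e^{2\pi i m x}$ when $N\mid m$ and $S_m=0$ otherwise, so that $E(\Gamma_{\mathrm{eq}})=N\sum_{k\in\Z}c_{kN}$.

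With this in hand, the $m=0$ term is the constant $N c_0$ and every remaining term is nonnegative, so the equispaced points annihilate all modes except the multiples of $N$. The temptation is to bound $E(\Gamma)$ from below by simply discarding the modes with $N\nmid m$, but this is not enough: for the surviving modes $N\mid m$ one only has $|S_m|^2\le N^2$, so those terms can in principle be traded against the discarded ones. Resolving this trade-off is the heart of the matter. I would therefore deploy the Cohn--Elkies/Cohn--Kumar linear-programming bound \cite{CohKum07}: construct an auxiliary function $h:\R\to\R$ with $h\le\phi_\alpha$ pointwise and $\F h\ge0$, for which
\begin{equation}
    E(\Gamma)\ge\rho\,\F h(0)-h(0)
\end{equation}
holds for every periodic configuration of the fixed density $\rho$, and then arrange that equality is attained by the equispaced lattice. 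Equality forces $h$ to agree with $\phi_\alpha$ at every nonzero lattice point and $\F h$ to vanish at every nonzero dual-lattice point, so $h$ must solve a Hermite-type interpolation problem at the scaled integers.

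The main obstacle is the construction of this certificate $h$, and in particular verifying the two sign conditions $h\le\phi_\alpha$ and $\F h\ge0$ \emph{simultaneously for all} $\alpha>0$ — this is precisely what upgrades ``ground state'' to ``universally optimal''. In dimension one the construction is tractable because the integer lattice is self-dual: the interpolation of the Gaussian at the integers is governed by theta functions, whose positivity is the content of the product representation behind Lemma~\ref{lem:theta_triple}, and the required domination can be read off from the sign pattern of the interpolation kernel. For a general completely monotone potential the same scheme applies after writing the potential as a superposition of Gaussians (Hausdorff--Bernstein--Widder), so it suffices to treat each $\phi_\alpha$. Finally, uniqueness of the minimizer follows from strictness: since every $c_m$ is strictly positive, equality in the lower bound forces $S_m=0$ for all $m$ with $N\nmid m$, and the only $N$-point configuration on the circle killing all of these modes is the equispaced one, up to a global shift.
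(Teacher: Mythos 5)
First, a point of comparison: the paper does not prove Theorem~\ref{thm:Cohn-Kumar} at all — it is imported verbatim from \cite{CohKum07} as an auxiliary result — so there is no internal proof to measure you against; your argument must stand as a self-contained proof (or a faithful reconstruction of Cohn--Kumar's). Your reductions are correct and well observed: the periodization to $F$, the expansion $E(\Gamma)=\frac{1}{N}\sum_{m}c_m|S_m|^2$ with $c_m=\sqrt{\pi/\alpha}\,e^{-\pi^2m^2/\alpha}>0$, the computation $E(\Gamma_{\mathrm{eq}})=N\sum_{k}c_{kN}$, and especially the remark that discarding the modes with $N\nmid m$ only gives $E(\Gamma)\ge Nc_0$, a bound the equispaced configuration does \emph{not} attain. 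That last observation correctly identifies why positive-definiteness of the Gaussian alone is insufficient and why the theorem is genuinely nontrivial.

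But from there the proposal has a real gap: the entire burden is shifted onto an LP certificate $h$ with $h\le\phi_\alpha$, $\F h\ge0$, and equality at the equispaced lattice — and this $h$ is never constructed, nor are its two sign conditions verified. Producing such an $h$ (a Hermite-type interpolation of $\phi_\alpha$ at $\tfrac{1}{N}\Z$ with one-sided error and nonnegative Fourier transform, simultaneously for every $\alpha>0$) is not a routine verification; it is precisely the mathematical content of the Cohn--Kumar theorem. Invoking \cite{CohKum07} for it makes the argument circular, since that reference is the very statement being proved, while without it the argument simply stops at the hard step. A secondary slip: your uniqueness argument appeals to the wrong positivity. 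Equality in the LP bound forces $\F h(m)\,|S_m|^2=0$ for all $m\neq0$, not $c_m|S_m|^2=0$; to conclude $S_m=0$ for $N\nmid m$ you need strict positivity of $\F h$ off $N\Z$ (or, alternatively, that the contact set $\{h=\phi_\alpha\}$ is exactly $\tfrac{1}{N}\Z$, so that all pairwise differences are forced into $\tfrac{1}{N}\Z$), and both are again properties of the certificate you have not built. As it stands, the proposal is an accurate roadmap of the known proof, not a proof.
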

The term \textit{universally optimal} refers to the fact that the result for the Gaussian function $\phi_\alpha$ implies that the same holds for any completely monotone function of squared distance. A function $f: \R_+ \to \R_+$ is completely monotone if and only if
\begin{equation}
    (-1)^{k} f^{(k)}(r) \geq 0 \quad \text{ for all } k \in \N.
\end{equation}
By the Bernstein-Widder theorem (\cite{Ber28}, \cite{Wid41}) any completely monotone function can also be written as the Laplace transform of a non-negative Borel measure $\mu$.
\begin{theorem}[Bernstein-Widder, 1928/1941]\label{thm:Bernstein-Widder}
    Let $f$ be completely monotone. Then there exists a non-negative Borel measure $\mu_f$ such that
    \begin{equation}
        f(r) = \int_0^\infty e^{-\alpha r} \, d\mu_f(\alpha).
    \end{equation}
\end{theorem}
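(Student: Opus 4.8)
The forward (``if'') direction is immediate and I would dispose of it first: if $f(r)=\int_0^\infty e^{-\alpha r}\,d\mu_f(\alpha)$ with $\mu_f\geq 0$, then differentiating $k$ times under the integral sign gives $(-1)^k f^{(k)}(r)=\int_0^\infty \alpha^k e^{-\alpha r}\,d\mu_f(\alpha)\geq 0$, the differentiation being justified on any half-line $r\geq r_0>0$ by the integrable majorant $\alpha^k e^{-\alpha r_0}$. Hence $f$ is completely monotone, and the real content is the converse. My plan for the converse is to build the representing measure by discretizing $f$, invoking the solution of the Hausdorff moment problem on each grid, and then passing to a continuum limit.

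First I would fix a step $h>0$ and set $c_n:=f((n+1)h)$ for $n\geq 0$ (the shift by one keeps us away from the origin, where $f$ and its derivatives need not behave well). Writing $\Delta_h$ for the forward difference in steps of $h$, the iterated difference $\Delta_h^k f$ is an average of $h^k f^{(k)}$ over interior points, so complete monotonicity of $f$ forces the discrete sign pattern $(-1)^k(\Delta^k c)_n\geq 0$ for all $k,n$; that is, $(c_n)_n$ is a completely monotone sequence. By the Hausdorff moment theorem (the place where one invokes Bernstein polynomials, or the discrete moment theory directly), there is a non-negative measure $\nu_h$ on $[0,1]$ with $c_n=\int_0^1 t^n\,d\nu_h(t)$. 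The substitution $t=e^{-\alpha h}$ transports $\nu_h$ to a non-negative measure $\mu_h$ on $[0,\infty)$ for which
\begin{equation}
    f\big((n+1)h\big)=\int_0^\infty e^{-\alpha(n+1)h}\,d\mu_h(\alpha),\qquad n\geq 0,
\end{equation}
so the desired Laplace representation already holds along the arithmetic grid $h\N$.

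It remains to let $h\to 0^+$. The total masses satisfy $\mu_h([0,\infty))=f(h)$, which stays bounded (by $f(0^+)$, with the value $+\infty$ handled by an initial truncation), so by Helly's selection theorem some sequence $h_j\to 0$ yields $\mu_{h_j}\to\mu$ vaguely. Because the grids $h_j\N$ become dense in $(0,\infty)$ and both $r\mapsto f(r)$ and $r\mapsto\int_0^\infty e^{-\alpha r}\,d\mu(\alpha)$ are continuous there, the grid identities upgrade to $f(r)=\int_0^\infty e^{-\alpha r}\,d\mu(\alpha)$ for every $r>0$. Uniqueness of $\mu_f$ then follows from injectivity of the Laplace transform, equivalently from density of the exponentials $\{e^{-\alpha r}\}$ via Stone--Weierstrass after the change of variable $t=e^{-r}$. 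I expect the genuine difficulty to sit entirely in this last step: controlling escape of mass to $\alpha\to\infty$ or $\alpha\to 0$ in the vague limit, so that no mass is lost and $\mu$ really represents $f$. This is precisely where the regularity of $f$ near $0^+$ and its decay must be used, and it is the only point at which the passage from the family of grid representations to a single global identity can break down.
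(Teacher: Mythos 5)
The paper never proves this statement: it is quoted as the classical Bernstein--Widder theorem, with citations to Bernstein (1928) and Widder (1941), and is then used as a black box to pass from Gaussian potentials to general completely monotone potentials. So there is no proof in the paper to compare against; your text has to be judged as a self-contained proof of the classical theorem. Your route --- completely monotone sequences on a grid, the Hausdorff moment theorem, then a limit $h\to 0^+$ --- is indeed the standard one (essentially Widder's own, and Feller's).

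Judged on that basis, there is one bookkeeping slip and one genuine gap. The slip: pushing $\nu_h$ forward under $t=e^{-\alpha h}$ turns $t^n$ into $e^{-\alpha n h}$, so what you actually obtain is $f((n+1)h)=\int_0^\infty e^{-\alpha n h}\,d\tilde{\mu}_h(\alpha)$, i.e.\ a Laplace representation of the shifted function $f(\cdot+h)$ on the grid $h\N$, not of $f$ itself. To get the exponent $(n+1)h$ you must weight the measure, $d\mu_h=t^{-1}\,d\nu_h$, and then handle a possible atom of $\nu_h$ at $t=0$, which corresponds to $\alpha=+\infty$ and is killed by $t^n$ only for $n\geq 1$. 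This is repairable. The genuine gap is exactly the one you flag yourself and then leave open: the passage $h\to 0^+$ is not an argument as written. Helly gives vague convergence, i.e.\ convergence against $C_c$, but $e^{-\alpha r}$ is not compactly supported and does not vanish at $\alpha=0$, so mass can leak to $\alpha=0$ or $\alpha=\infty$; moreover each identity holds only on its own grid, so at a fixed $r>0$ you must take grid points $m_jh_j\to r$ and pass to the limit in the argument and in the measure simultaneously, with uniform tail control. Saying that this is ``precisely where the regularity of $f$ near $0^+$ and its decay must be used'' names the difficulty but does not resolve it. The standard repairs are: (i) use nested grids $h, h/2, h/4,\dots$ and uniqueness in the Hausdorff moment problem to see that the representing measures are consistent, so a single measure represents $f$ at all dyadic points, then conclude at arbitrary $r>0$ by continuity and monotone convergence; or (ii) invoke the continuity theorem for Laplace transforms, where the tightness argument is built in once the transforms converge pointwise on $(0,\infty)$. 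Without one of these (or an equivalent), your proposal is the correct classical program rather than a complete proof.
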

It follows that, for any dimension $d$, if a configuration is optimal for the Gaussian $\phi_\alpha$ for all $\alpha > 0$, then it is also optimal for all completely monotone potential functions $p$ of squared distance:
\begin{align*}
    \frac{1}{N} \sum_{j,k=1}^N  \sum_{\lambda\in\Lambda\setminus\{0\} } p(|\lambda+x_j-x_k|^2)
    & = \frac{1}{N} \sum_{j,k=1}^N \sum_{\lambda\in\Lambda\setminus\{0\} } \int_0^\infty e^{-\alpha |\lambda+x_j-x_k|^2} \, d\mu_p(\alpha)\\
    & = \frac{1}{N} \int_0^\infty \bigg( \sum_{j,k=1}^N \sum_{\lambda\in\Lambda\setminus\{0\} } e^{-\alpha |\lambda+x_j-x_k|^2} \bigg) d\mu_p(\alpha).
\end{align*}
Note that, by Bernstein \cite[Thm.~9.16]{Ber28}, the above integral is indeed convergent and the interchange of summation and integration is justified by monotone convergence: the potential is an increasing limit of weighted sums of Gaussians (cf.~\cite[Sec.~1]{Coh-Via22}).

\subsection{The result of Montgomery}
In dimension 2, the energy minimization problem is solved among \textit{lattices} by the result of Montgomery \cite{Mon88}. To the best of our knowledge, this is the best available result in dimension 2 after the appearance of \cite{CouSch22}. Before stating the result, we introduce the following (theta-like) function:
\begin{equation}\label{eq:f_Gamma}
    f_\Gamma(x;\alpha) = \sum_{\gamma \in \Gamma} \phi_\alpha(|x-\gamma|) = \sum_{\gamma \in \Gamma} e^{-\alpha |x-\gamma|^2} , \quad \alpha > 0.
\end{equation}
We will often write $f_\Gamma(x)$ instead of $f_\Gamma(x;\alpha)$ if we do not need a specific $\alpha$.
\begin{theorem}[Montgomery, 1988]\label{thm:Montgomery}
    For all $\alpha > 0$ and among all lattices of any fixed density, the following inequality (for lattice theta functions) holds
    \begin{equation}
        f_\Lambda(0) \geq f_{\Lambda_2}(0), \quad \forall \alpha > 0,
    \end{equation}
    with equality if and only if $\Lambda$ is the hexagonal lattice $\Lambda_2$.
\end{theorem}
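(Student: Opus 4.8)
The plan is to pass to the standard modular parametrization of planar lattices and reduce the problem to locating the minimum of a single real-analytic function on the modular fundamental domain. Identifying $\R^2 \cong \C$, every lattice of covolume $1$ can be written up to rotation as $\Lambda_\tau = y^{-1/2}(\Z + \tau\Z)$ with $\tau = x + iy \in \mathbb{H}$, so that
\begin{equation}
    f_{\Lambda_\tau}(0;\alpha) = \sum_{(m,n) \in \Z^2} e^{-\frac{\alpha}{y}|m + n\tau|^2} =: \Theta(\tau).
\end{equation}
First I would record that $\Theta$ is invariant under $\tau \mapsto \tau + 1$ (relabelling $m$) and under $\tau \mapsto -1/\tau$ (this multiplies the lattice by $1/\tau$, a rotation--scaling preserving covolume $1$), hence invariant under all of $\mathrm{PSL}(2,\Z)$. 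Consequently it suffices to minimize $\Theta$ over the closed fundamental domain $\mathcal{F} = \{\tau : |\Re\tau| \le \tfrac12,\ |\tau| \ge 1\}$, and one checks that $\Theta \to \infty$ as $y \to \infty$ (short lattice vectors appear at the cusp), so a minimum is attained in the compact part. The target is to show it sits at the corner $\tau_* = \tfrac12 + \tfrac{\sqrt3}{2}\,i$, which is precisely the shape of $\Lambda_2$.

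Next, writing $|m+n\tau|^2 = (m+nx)^2 + n^2 y^2$ and applying Poisson summation in $m$, I would factor
\begin{equation}
    \Theta(\tau) = \sqrt{\tfrac{\pi y}{\alpha}} \sum_{n \in \Z} e^{-\alpha n^2 y}\, \theta_3\!\left(nx; \tfrac{\pi y}{\alpha}\right),
\end{equation}
exhibiting $\Theta$ as a positive combination of the theta-$3$ values controlled by Lemma \ref{lem:theta_triple}. The goal of this step is to prove that for fixed $y$ the slice $x \mapsto \Theta(x+iy)$ is non-increasing on $[0,\tfrac12]$; intersecting with $\mathcal{F}$ this forces the minimum of each admissible slice onto the edge $x=\tfrac12$ (reached once $y \ge \tfrac{\sqrt3}{2}$). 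On that edge $nx = n/2$ drives every term to its extreme from Lemma \ref{lem:theta_triple}, and splitting the sum by the parity of $n$ factors $\Theta(\tfrac12+iy)$ into a short combination of one-dimensional theta functions; minimizing this over $y \ge \tfrac{\sqrt3}{2}$, together with the companion analysis on the unit arc $|\tau|=1$ (which the involution $\tau \mapsto -1/\tau$ maps to itself), then pins the minimum at $\tau_*$. The self-duality of $\tau_*$ is the structural reason the Jacobi transformation formula closes the argument exactly there.

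The main obstacle is the monotonicity in $x$. Term by term, Lemma \ref{lem:theta_triple} pushes each $\theta_3(nx;\cdot)$ toward its minimum at $\{nx\}=\tfrac12$, but these conditions are mutually frustrated -- no $x \neq 0$ makes all fractional parts agree -- so a naive termwise comparison fails. I expect the right route is to differentiate the Fourier expansion in $x$, group the contribution of each frequency, and exploit the super-exponential decay $e^{-\alpha n^2 y}$ to reduce the sign of $\partial_x \Theta$ to the competition among a few leading shells, the remaining ones being dominated. This shell-competition estimate is the delicate core of Montgomery's analysis, and precisely the mechanism whose extension beyond lattices proved fragile; everything else is bookkeeping with the theta transformation and Lemma \ref{lem:theta_triple}.
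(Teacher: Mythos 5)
First, a point of order: the paper does not prove this statement at all --- it is Montgomery's theorem, quoted from \cite{Mon88} and used as a black box --- so your attempt can only be measured against Montgomery's original argument. Measured that way, your outline reproduces its skeleton faithfully: the parametrization $\Lambda_\tau = y^{-1/2}(\Z+\tau\Z)$, the $\mathrm{PSL}(2,\Z)$-invariance and reduction to the fundamental domain, the Poisson-summation factorization $\Theta(\tau)=\sqrt{\pi y/\alpha}\,\sum_n e^{-\alpha n^2 y}\,\theta_3\bigl(nx;\tfrac{\pi y}{\alpha}\bigr)$ (which is correct under the paper's normalization of $\theta_3$ in \eqref{eq:theta_3}), the monotonicity of $x\mapsto\Theta(x+iy)$ on $[0,\tfrac12]$, and the parity splitting on the edge $x=\tfrac12$. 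This is exactly how Montgomery proceeds, and you have also correctly identified why termwise use of Lemma \ref{lem:theta_triple} cannot work (the fractional parts $\{nx\}$ cannot all equal $\tfrac12$ simultaneously).

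Nevertheless, as a proof the proposal has genuine gaps, the main one of which you name yourself. (a) The decisive step, $\partial_x\Theta\le 0$ for $0\le x\le\tfrac12$ and $y$ in the range covering the fundamental domain, is not carried out: you describe a strategy (differentiate, group frequencies, use the decay $e^{-\alpha n^2 y}$) and then explicitly defer ``the delicate core of Montgomery's analysis''. Carrying it out is precisely where one needs the quantitative control of the factor $Q$ in $\theta_3'(x;\beta)=-\sin(2\pi x)\,Q(x;\beta)$, i.e.\ Montgomery's Lemmas 1 and 2 (the paper imports the latter as Lemma \ref{lem:Montgomery}), to show that the $n=1$ shell dominates the oscillating contributions of all higher shells; without this the argument has no content. (b) Even granting (a), you must still minimize the one-variable function $y\mapsto\Theta(\tfrac12+iy)$ over $y\ge\tfrac{\sqrt3}{2}$ and show its minimum occurs only at $y=\tfrac{\sqrt3}{2}$; your sketch waves at ``self-duality'' and ``the Jacobi transformation closes the argument'', but this edge analysis is itself a nontrivial part of Montgomery's proof (again resting on the $Q$-estimates), not bookkeeping --- note in particular that the duality map $\alpha\mapsto 1/\alpha$ does not preserve the segment $x=\tfrac12$, so it cannot by itself pin the minimum. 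A smaller structural remark: once (a) holds on every horizontal slice, the separate ``companion analysis on the unit arc'' you announce is unnecessary, since every slice of the fundamental domain (including those ending on the arc) has its minimum at $x=\tfrac12$; the genuinely missing second step is (b), not the arc. Finally, the equality/uniqueness claim of the theorem requires strictness in both (a) and (b), which your sketch never addresses.
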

As the energy of a lattice $\Lambda$ can be written as $E_{\phi_\alpha}(\Lambda) = f_\Lambda(0)-1$, we see that Montgomery's result implies energy minimization for the Gaussian. By Theorem \ref{thm:Bernstein-Widder}, the same holds for completely monotone potentials of squared distance.

\section{Proofs}
\subsection{Proof of Theorem \ref{thm:tensor}}
\begin{proof}[Proof of Theorem \ref{thm:tensor}]
    If $\Gamma_\times$ is as in \eqref{eq:Cartesian_thm}, i.e., it is the Cartesian product of 1-dimensional periodic configurations $\Gamma_l$, $l = 1, \ldots d$, then it can be described as
    \begin{equation}\label{eq:Cartesian_periodic} 
		\Gamma_\times = \bigcup\limits_{l=1}^d \bigcup\limits_{j_l=1}^{J_l} \delta \Z^{d} +(x_{1,j_1}, x_{2,j_2}, \, \ldots \, , x_{d, j_d}),
    \end{equation}
    where $\delta = \diag(\delta_1, \, \ldots \, , \delta_d)$ denotes the diagonal matrix with entries $\delta_1, \, \ldots \, ,\delta_d$. Using the factorization properties of the exponential function, the energy \eqref{eq:energy} for $\Gamma_\times$ can be written as follows:
    \begin{align*}
        E(\Gamma_\times) & = \frac{1}{\prod\limits_{m=1}^d J_m} \, \sum\limits_{l=1}^d \sum\limits_{j_l,k_l = 1}^{J_l}
        \sum\limits_{\lambda\in \Z^d\setminus\{0\} } \phi_\alpha(|\delta\lambda+(x_{1,j_1} - x_{1,k_1}, \, \ldots \, , x_{d,j_d}-x_{d,k_d})|) \\
        & = \frac{1}{\prod\limits_{m=1}^d J_m} \, \sum\limits_{l=1}^d \sum\limits_{j_l,k_l = 1}^{J_l}
        \sum\limits_{\lambda\in \Z^d\setminus\{0\}} \prod\limits_{m=1}^d \phi_\alpha(|\delta_m \lambda_m +x_{m,j_m} - x_{m,k_m}|) \\
        & = \prod\limits_{m=1}^d \bigg(\frac{1}{J_m}
        \sum\limits_{j_m,k_m = 1}^{J_m} \sum\limits_{\lambda\in \Z\setminus\{0\}} \phi_\alpha(|\delta_m \lambda +x_{m,j_m} - x_{m,k_m}|) \bigg).
    \end{align*}
    This shows that the problem reduces to finding the optimal point configuration in each coordinate, i.e., we can refer to the one-dimensional solution.
    By the result of Cohn and Kumar \cite{CohKum07} (Theorem \ref{thm:Cohn-Kumar}) it follows that the optimal solutions need to be equispaced in each of the $\Gamma_l$, i.e., it has to be a cuboid-shaped lattice of the form
    \begin{equation}
        \Lambda_{\beta} = \beta_1 \Z \times \ldots \times \beta_d \Z,
    \end{equation}
    with $\beta_m = \delta_m/J_m$, $m = 1, \ldots, d$. For the lattice $\Lambda_\beta$, the energy for the Gaussian potential can be written as a product of Jacobi theta functions:
    \begin{equation}
        E(\Lambda_\beta) = \prod_{j=1}^d \sum_{k \in \Z} e^{-\alpha \beta_j^2 k^2} - 1 = \prod_{j=1}^d \theta_3\left(0; \tfrac{\alpha \beta_j^2}{\pi} \right) - 1, \quad \alpha > 0.
    \end{equation}
    It was also proven by Montgomery \cite{Mon88} (see also \cite{BetPet17}, \cite{FauSte17}) that the above product is minimal if and only if $\beta_1 = \ldots = \beta_d$, when $\prod_{k=1}^d \beta_k$ is fixed.
\end{proof}

\subsection{The union of 2 lattices}
Lattices are the simplest periodic structures. The next natural step is to consider the union of two lattices in $\R^d$;
\begin{equation}
    \Gamma = \Lambda \cup \left( \Lambda + x \right), \quad x \notin \Lambda.
\end{equation}
This already turns out to be tricky very quickly as, in general, it is awfully difficult to determine the shift (which depends on $(d^2+d-2)/2$ lattice parameters and $\alpha$) of the second lattice which minimizes the energy of the configuration.

\subsubsection{The union of two cuboid-shaped lattices}
The simplest case for the union of 2 lattices is the following: let $\Lambda = \delta \Z^d$, where $\delta = \diag(\delta_1, \ldots, \delta_d)$. Consider the periodic configuration of the following form:
\begin{equation}\label{eq:Gamma_x}
    \Gamma_x = \Lambda \cup (\Lambda + \delta x), \quad x \notin \Z^d.
\end{equation}
Then, as we will show, the energy of the system can be written as
\begin{equation}\label{eq:E_Gamma_x}
    E(\Gamma_x) = E(\Lambda) + f_\Lambda(x),
\end{equation}
where $f_\Lambda(x)$ is defined as in \eqref{eq:f_Gamma}. Equation \eqref{eq:E_Gamma_x} actually holds for any lattice, not only cuboid-shaped ones. For a lattice of type \eqref{eq:Gamma_x}, it follows from the behavior of $f_\Lambda$ and Lemma \ref{lem:theta_triple} (cf.\ \cite[Lemma~1]{Mon88}) that
\begin{equation}
    E(\Gamma_x) \geq E(\Lambda) + f_\Lambda(\delta \mathbf{\tfrac{1}{2}}), \quad \mathbf{\tfrac{1}{2}} = (\tfrac{1}{2}, \, \ldots \, , \tfrac{1}{2})
\end{equation}
with equality being attained if and only if $x \in \mathbf{\tfrac{1}{2}} + \Z^d$, as we will now show.
\begin{proposition}\label{pro:energy_2_lattices}
    Let $\Gamma_x$ be defined as in \eqref{eq:Gamma_x}. The energy of $\Gamma_x$ is given by \eqref{eq:E_Gamma_x} and is minimal if and only if $x \in \mathbf{\frac{1}{2}} + \Z^d$. Hence, the minimizer is a lattice.
\end{proposition}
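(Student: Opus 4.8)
The plan is to establish the closed form \eqref{eq:E_Gamma_x} by a direct count, reduce the minimization to a one-dimensional theta inequality via the product structure of $\Lambda$, and then observe that the optimizer is again a lattice.

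First I would specialize the energy identity \eqref{eq:energy} to $J=2$ with the two representatives $x_1=0$ and $x_2=\delta x$. The four index pairs $(j,k)$ split into two diagonal pairs, each contributing $\sum_{\lambda\in\Lambda\setminus\{0\}}\phi_\alpha(|\lambda|)=E(\Lambda)$, and two off-diagonal pairs. Since $x\notin\Z^d$ forces $\delta x\notin\delta\Z^d=\Lambda$, the point excluded from $\Lambda$ in the off-diagonal sums is never attained, so each such sum runs over all of $\Lambda$; using $\Lambda=-\Lambda$, each of them equals $f_\Lambda(\delta x)$. Dividing by $J=2$ collapses the four contributions to $E(\Lambda)+f_\Lambda(\delta x)$, the claimed identity \eqref{eq:E_Gamma_x}. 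This step is routine bookkeeping.

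Because $E(\Lambda)$ is independent of $x$, minimizing $E(\Gamma_x)$ amounts to minimizing $f_\Lambda(\delta x)$, and here I would exploit that $\Lambda=\delta\Z^d$ is a product lattice. Writing $|\delta x-\delta n|^2=\sum_{m=1}^d\delta_m^2(x_m-n_m)^2$ and summing the resulting product over $n\in\Z^d$ factors the lattice sum as
\begin{equation}
    f_\Lambda(\delta x)=\prod_{m=1}^d\ \sum_{k\in\Z}e^{-\alpha\delta_m^2(x_m-k)^2}=:\prod_{m=1}^d\Theta_m(x_m),
\end{equation}
where each $\Theta_m$ is a strictly positive, $1$-periodic one-dimensional lattice theta function. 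Applying Poisson summation to a single factor rewrites it as $\sqrt{\pi/(\alpha\delta_m^2)}\,\theta_3\big(x_m;\pi/(\alpha\delta_m^2)\big)$, i.e.\ a positive constant (independent of $x_m$) times a function in exactly the normalization of Lemma \ref{lem:theta_triple}.

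It remains to turn the factorization into an ``if and only if''. By Lemma \ref{lem:theta_triple} each factor satisfies $\Theta_m(x_m)\geq\Theta_m(\tfrac{1}{2})$, with equality precisely when $x_m\in\Z+\tfrac{1}{2}$. Writing $\Theta_m(x_m)=\Theta_m(\tfrac{1}{2})\,r_m(x_m)$ with $r_m\geq1$, the product equals $\big(\prod_m\Theta_m(\tfrac{1}{2})\big)\prod_m r_m(x_m)$, which is minimal exactly when every $r_m=1$, since the $r_m$ are positive and decoupled. Hence $f_\Lambda(\delta x)$, and thus $E(\Gamma_x)$, is minimal if and only if $x\in\mathbf{\tfrac{1}{2}}+\Z^d$. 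At such an $x$ the configuration is $\delta\big(\Z^d\cup(\Z^d+\mathbf{\tfrac{1}{2}})\big)$, and $\Z^d\cup(\Z^d+\mathbf{\tfrac{1}{2}})$ is closed under addition (as $2\cdot\mathbf{\tfrac{1}{2}}\in\Z^d$), hence is the lattice generated by the standard basis together with $\mathbf{\tfrac{1}{2}}$; applying the invertible linear map $\delta$ preserves this, so the minimizer is a lattice. The computation is elementary throughout; the only load-bearing points are matching the Gaussian lattice sum to the $\theta_3$-normalization of Lemma \ref{lem:theta_triple} through Poisson summation, and the decoupled equality analysis that rules out one coordinate compensating for another — I expect the latter to be the sole step requiring genuine care.
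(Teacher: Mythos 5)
Your proposal is correct and takes essentially the same route as the paper's own proof: derive the identity \eqref{eq:E_Gamma_x} by direct bookkeeping (the paper does this for a general lattice, you via \eqref{eq:energy} with $J=2$), factor $f_\Lambda(\delta x)$ over the coordinates, apply Poisson summation to match the $\theta_3$ normalization, and invoke Lemma \ref{lem:theta_triple} factor by factor. Your explicit treatment of the equality case (ruling out one coordinate compensating another) and of why the minimizer is a lattice is slightly more detailed than the paper's, but the argument is the same.
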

\begin{proof}
    First, allow $\Lambda$ to be any lattice. Set $\Gamma_x = \Lambda \cup (\Lambda + x)$, $x_1 = 0$ and $x_2 = x \notin \Lambda$. Note that $f_\Lambda(x) = f_\Lambda(-x)$ for any lattice $\Lambda$. Then, we can write the energy as
    \begin{align}
        E(\Gamma_x)
        & = \frac{1}{2} \sum_{\gamma, \gamma' \in \Gamma_x} e^{-\alpha |\gamma-\gamma'|^2} - 1
        = \frac{1}{2} \sum_{j,k=1}^2 \sum_{\lambda \in \Lambda} e^{-\alpha |\lambda + x_j - x_k|^2} - 1\\
        & = \sum_{\lambda \in \Lambda} e^{-\alpha |\lambda|^2} -1 + \sum_{\lambda \in \Lambda} e^{-\alpha |\lambda + x|^2}
        = E(\Lambda) + \sum_{\lambda \in \Lambda} e^{-\alpha|\lambda+x|^2}\\
        & = E(\Lambda) + f_\Lambda(x).
    \end{align}
    Due to the special structure of $\Lambda = \delta \Z^d$,
    we can write the energy as
    \begin{equation}
        E(\Gamma_x)
        = E(\Lambda) + \prod_{k=1}^d \sum_{n \in \Z} e^{- \alpha \delta_k^2 (n + x_k)^2}
        = E(\Lambda) + \prod_{k=1}^d \sqrt{\tfrac{\pi}{{\alpha \delta_k^2}}} \, \theta_3 \left( x_k;\tfrac{\pi}{\alpha \delta_k^2} \right).
    \end{equation}
    For the second equality, we used the Poisson summation formula. Now, we can use Lemma \ref{lem:theta_triple} to see that $E(\Gamma_x)$ is minimal if and only if $x \in \mathbf{\frac{1}{2}} + \Z^d$.
\end{proof}

\subsubsection{The union of two hexagonal lattices.}
As the hexagonal lattice is outstanding in the set of 2-dimensional lattices, we consider periodic configurations of the form
\begin{equation}\label{eq_union hexagons}
    \Gamma_2(x) = \Lambda_2 \cup (\Lambda_2+x).
\end{equation}
Shifting $\Lambda_2$ by a deep hole $x_0$ gives the so-called honeycomb structure (see Figure~\ref{fig:honeycomb})
\begin{equation}
    \Gamma_{\textnormal{\hexagon}} = \Lambda_2 \cup \left(\Lambda_2 + x_0\right).
\end{equation}
\begin{figure}[h!t]
    \includegraphics[width=.45\textwidth]{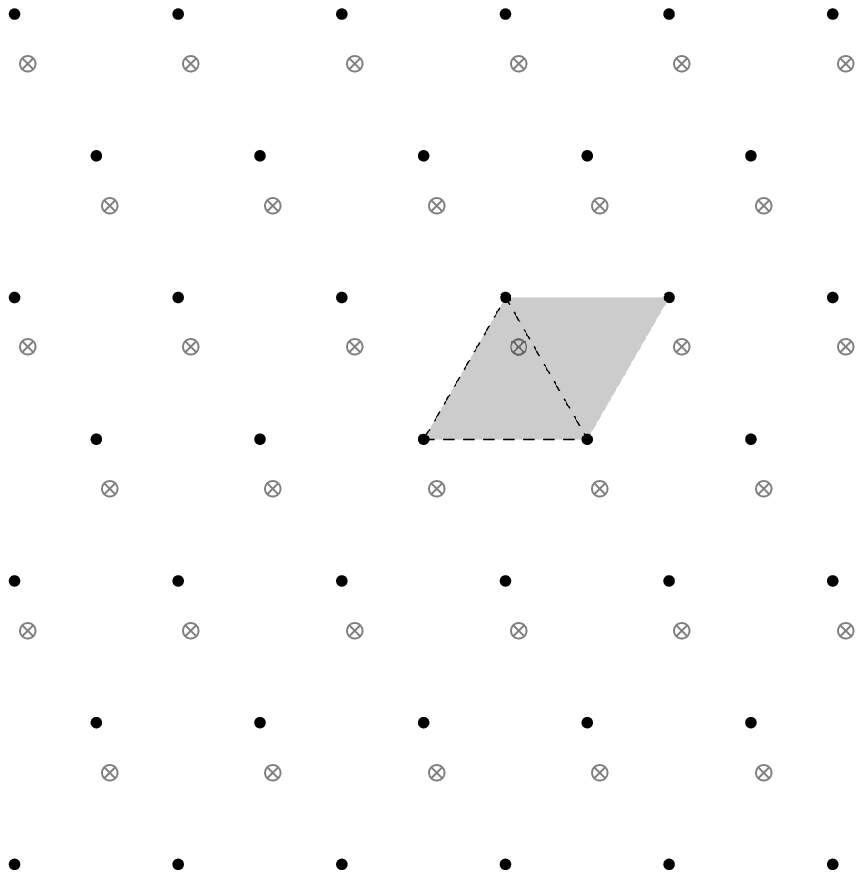}
    \hfill
    \includegraphics[width=.45\textwidth]{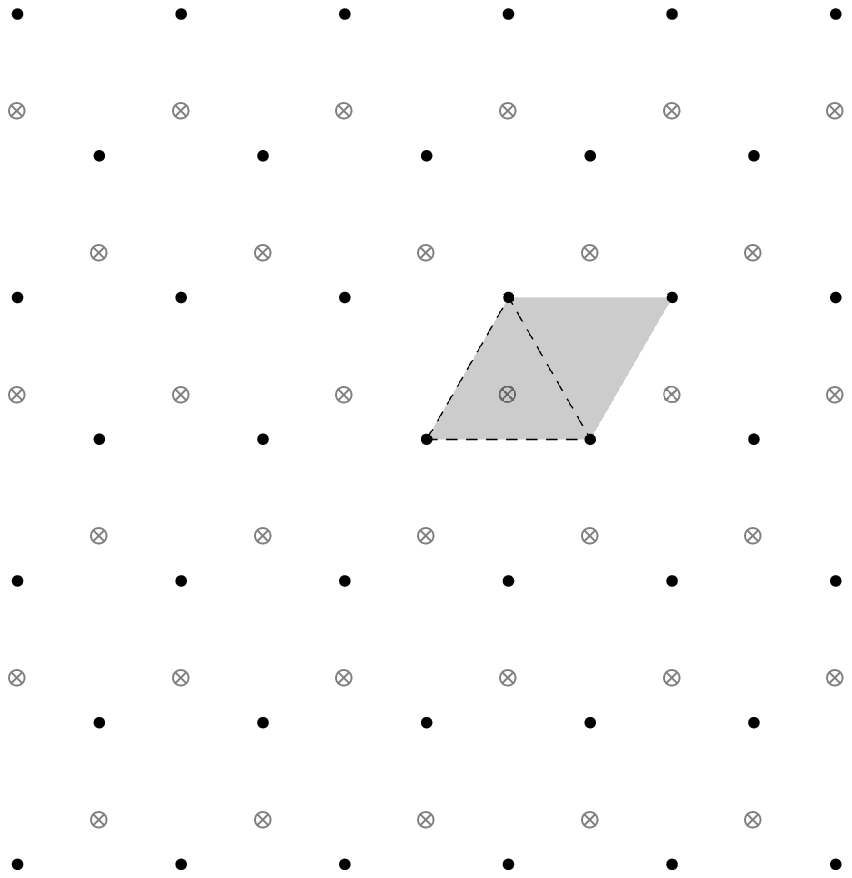}
    \caption{Union of two hexagonal lattices by an arbitrary shift (left) and a shift by a deep hole (right). A deep hole is the center of a fundamental triangle and we have two deep holes in a fundamental parallelogram. The resulting configuration is known as the honeycomb structure, which is not a lattice.}
    \label{fig:honeycomb}
\end{figure}
Coordinates of a \textit{deep hole} (cf.~\cite[Chap.~1]{ConSlo98}) in the hexagonal lattice are given by
\begin{equation}\label{eq:deep_hole}
    x_0 = \sqrt{\tfrac{2}{\sqrt{3}}}
    \begin{pmatrix}
        1 & \frac{1}{2}\\
        0 & \frac{\sqrt{3}}{2}
    \end{pmatrix}
    \begin{pmatrix}
        k + \frac{m}{3}\\
        l + \frac{m}{3}
    \end{pmatrix},
    \quad k,l \in \Z, \, m \in \{1,2\}.
\end{equation}
\begin{proposition}\label{pro:honeycomb}
    Among all configurations $\Gamma_2(x)$ of the form \eqref{eq_union hexagons}, the honeycomb structure $\Gamma_\textnormal{\hexagon}$ is the unique minimizer of $E(\Gamma_2(x))$.
\end{proposition}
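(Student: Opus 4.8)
The plan is to reduce the two‑lattice problem to minimizing a single lattice Gaussian sum, and then to minimize that sum by exploiting the fact that $\Lambda_2$ is a \emph{centered rectangular} lattice. Since \eqref{eq:E_Gamma_x} was shown to hold for an arbitrary lattice, we have $E(\Gamma_2(x)) = E(\Lambda_2) + f_{\Lambda_2}(x)$, and $f_{\Lambda_2}$ is $\Lambda_2$‑periodic. Hence minimizing $E(\Gamma_2(x))$ over $x$ is equivalent to minimizing $f_{\Lambda_2}(x)$ over one fundamental cell of $\Lambda_2$, and the assertion becomes: \emph{$f_{\Lambda_2}$ attains its minimum exactly at the deep holes $x_0$}, which is precisely the honeycomb $\Gamma_{\textnormal{\hexagon}}$.

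First I would introduce the index‑$2$ rectangular sublattice $R = a\Z \times b\Z$ with $a = \sqrt{2/\sqrt 3}$ and $b = \sqrt 3\, a$, so that $\Lambda_2 = R \cup (R + c)$ with centering vector $c = (\tfrac a2, \tfrac b2)$. Because $R$ is a product lattice, $f_R$ factors into a product of one‑dimensional Gaussian sums, which yields
\[
    f_{\Lambda_2}(x_1,x_2) = P(x_1)\,Q(x_2) + P\!\left(x_1 - \tfrac a2\right) Q\!\left(x_2 - \tfrac b2\right),
\]
where $P(t) = \sum_{k \in \Z} e^{-\alpha (t-ka)^2}$ and $Q(t) = \sum_{l \in \Z} e^{-\alpha (t-lb)^2}$ are (rescaled) theta‑$3$ functions. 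In these coordinates a deep hole sits at $x_0 = (\tfrac a2, \tfrac b6)$, while the lattice points are at $(0,0)$ and $c$, and the nearest‑neighbor bond midpoints at $(\tfrac a4, \tfrac b4)$ and its images.

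Next I would locate the minimum using Lemma \ref{lem:theta_triple}: both $P$ and $Q$ are maximal at multiples of their periods, minimal at the half‑periods, and (what I additionally need) strictly monotone on a half‑period. The reflections $x_1 \mapsto -x_1$, $x_1 \mapsto a - x_1$ together with the centering shift are symmetries of $f_{\Lambda_2}$; they pin the critical points to the lattice points (the maxima, by $6$‑fold symmetry), the deep holes (the candidate minima, $3$‑fold symmetry), and the bond midpoints ($2$‑fold symmetry, expected saddles). Using the factorization above and the ordering/monotonicity of $P$ and $Q$, the goal is to show that the value of $f_{\Lambda_2}$ at a deep hole is strictly below its value at every bond midpoint, and that the deep holes are the only local minima; a Morse count on the torus $\R^2/\Lambda_2$ (one maximum, three saddles, two minima per cell, summing to the Euler characteristic $0$) corroborates that the two deep holes exhaust the minima, and they are interchanged by $x \mapsto -x$, hence share the minimal value.

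The hard part will be the \emph{uniformity in $\alpha$}: a priori the global minimizer could switch between the deep hole and a bond midpoint as $\alpha$ varies, i.e.\ a phase transition. Both extreme regimes clearly favor the deep hole — as $\alpha \to \infty$ the sum is dominated by the nearest lattice point and the deep hole is the farthest point (covering radius $a/\sqrt 3 > a/2$), while as $\alpha \to 0$ Poisson summation over the (again hexagonal) dual lattice shows that the three shortest dual frequencies force the minimum to the deep hole — but bridging these for \emph{all} $\alpha$ is the crux. I expect the decisive step to be the clean comparison $f_{\Lambda_2}(\text{deep hole}) < f_{\Lambda_2}(\text{bond midpoint})$ valid for every $\alpha$; I would reduce it to a one‑variable theta inequality along the mirror line $x_1 = \tfrac a2$, where $f_{\Lambda_2}(\tfrac a2, x_2) = P(\tfrac a2)\,Q(x_2) + P(0)\,Q(x_2 - \tfrac b2)$ is a positive combination of shifted one‑dimensional theta functions, and settle it using the strict monotonicity of $Q$ together with the derivative identity forced by the threefold symmetry at the deep hole.
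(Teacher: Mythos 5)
Your opening reduction is exactly the paper's: by \eqref{eq:E_Gamma_x}, which holds for an arbitrary lattice, $E(\Gamma_2(x)) = E(\Lambda_2) + f_{\Lambda_2}(x)$, so the proposition is equivalent to the statement that $f_{\Lambda_2}$ attains its global minimum precisely at the deep holes, for every $\alpha > 0$. The gap is that this equivalent statement \emph{is} the entire difficulty, and your plan does not prove it. It is a known and genuinely hard theorem --- Baernstein's result \cite{Bae97} on the minimum of the heat kernel of the hexagonal flat torus $\R^2/\Lambda_2$ --- and the paper's proof consists of your reduction plus a citation of that theorem. You correctly flag the uniformity in $\alpha$ as ``the crux,'' but then only say you ``expect'' the decisive comparison to hold; that expectation is exactly where a proof would have to be supplied.

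Concretely, three steps of your sketch are missing or would fail. First, the symmetry argument shows that lattice points, deep holes, and bond midpoints \emph{are} critical points of $f_{\Lambda_2}$; it does not show they are the \emph{only} ones, and the Morse count $1-3+2=0$ is merely a consistency check: the Euler characteristic of the torus does not exclude further critical points, since an extra minimum--saddle pair leaves the count unchanged. Second, nothing in the sketch shows that the global minimizer must lie on the mirror line $x_1 = \tfrac{a}{2}$. Third, even on that line your (correct) factorization over the index-2 rectangular sublattice gives $f_{\Lambda_2}(\tfrac{a}{2}, x_2) = P(\tfrac{a}{2})\, Q(x_2) + P(0)\, Q(x_2 - \tfrac{b}{2})$, which on $(0,\tfrac{b}{2})$ is the sum of a strictly decreasing and a strictly increasing function of $x_2$; strict monotonicity of your one-dimensional theta sum $Q$ (Lemma \ref{lem:theta_triple} and its refinements) therefore gives no control on where, or whether uniquely, the interior minimum sits. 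One would need convexity-type information on $Q$ valid for \emph{all} $\alpha$, and establishing that uniformly is precisely the analytic content of Baernstein's theorem. So either cite \cite{Bae97}, after which your reduction finishes the proposition in two lines exactly as the paper does, or be prepared to reprove Baernstein's theorem from scratch --- a much larger undertaking than this proposition.
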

\begin{proof}
    The proof requires a result of Baernstein \cite{Bae97} on the minimal temperature of the heat kernel on the hexagonal flat torus $\R^2/\Lambda_2$. It states that
    \begin{equation}
        f_{\Lambda_2}(x) = \sum_{\lambda \in \Lambda_2} e^{-\alpha |\lambda+x|^2} \geq \sum_{\lambda \in \Lambda_2} e^{-\alpha |\lambda+x_0|^2} = f_{\Lambda_2}(x_0),
    \end{equation}
    with equality if and only if $x$ is another deep hole of $\Lambda_2$. The result now follows as
    \begin{equation}
        E(\Gamma_2(x)) = E(\Lambda_2) + f_{\Lambda_2}(x) \geq E(\Lambda_2) + f_{\Lambda_2}(x_0) = E(\Gamma_{\textnormal{\hexagon}}).
    \end{equation}
\end{proof}

We remark that the hexagonal lattice cannot be written as a union of two shifted hexagonal lattices. Regarding the conjectured universal optimality of the hexagonal lattice, the honeycomb $\Gamma_{\textnormal{\hexagon}}$ is probably the most outstanding rival in the set of periodic configurations which is not a lattice. We will now prove that, scaled to unit density, the honeycomb structure cannot beat the hexagonal lattice $\Lambda_2$.

\subsection{Proof of Theorem \ref{thm:hex_vs_honeycomb}}

We start by proving properties of two auxiliary functions. These will be used for refined convexity estimates of the energy.

\begin{wrapfigure}[0]{r}[10pt]{0.6\textwidth}
	\vskip-55pt
    \centering
    \includegraphics[width=.5\textwidth]{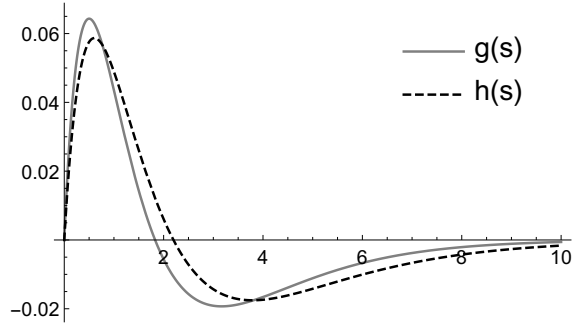}
    \caption{The auxiliary functions $g(s)$ and $h(s)$.}
    \label{fig:f(s)_g(s)}
\end{wrapfigure}

\vskip45pt

\begin{align}
    g(s) & = e^{-s}-\tfrac{1}{2}e^{-2s}-\tfrac{1}{2} e^{-2/3s},\label{eq:g}\hskip170pt \\
    h(s) & = e^{-s} - \tfrac{1}{4}e^{-s/2} - \tfrac{3}{4}e^{-3/2s}. \hskip170pt\label{eq:h}
\end{align}

\vskip65pt

\begin{lemma}\label{lemma:aux_estimate} The following estimates hold:
    \begin{equation}
    g(s) < 0 \quad \text{ for } \; 2<s<\infty
    \quad \text{ and } \quad
    h(s) < 0 \quad \text{ for } \; 6<s<\infty.
    \end{equation}
\end{lemma}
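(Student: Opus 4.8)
The plan is to prove each inequality by exhibiting, after a suitable substitution, that the relevant function is negative on the stated ray. Both $g$ and $h$ are differences of exponentials with distinct rates, so the natural first move is to factor out the slowest-decaying term and study the resulting expression. For $g(s) = e^{-s} - \tfrac{1}{2}e^{-2s} - \tfrac{1}{2}e^{-2s/3}$, the slowest decay as $s \to \infty$ comes from the $e^{-2s/3}$ term, so I would write
\begin{equation}
    g(s) = e^{-2s/3}\left( e^{-s/3} - \tfrac{1}{2}e^{-4s/3} - \tfrac{1}{2} \right),
\end{equation}
and since $e^{-2s/3} > 0$, it suffices to show that $G(s) := e^{-s/3} - \tfrac{1}{2}e^{-4s/3} - \tfrac{1}{2} < 0$ for $s > 2$. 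Setting $u = e^{-s/3} \in (0, e^{-2/3})$, this becomes the polynomial inequality $P(u) := u - \tfrac{1}{2}u^4 - \tfrac{1}{2} < 0$. One checks $P(1) = 0$, and since $P'(u) = 1 - 2u^3 > 0$ on $(0, 2^{-1/3})$, the function $P$ is increasing on the relevant interval and stays below its value $0$ at $u=1$; more precisely, for $s > 2$ we have $u < e^{-2/3} < 1$, and I would verify $P(u) < 0$ on all of $(0,1)$ by noting $P(u) < 0 = P(1)$ for $u < 1$ once monotonicity (or a direct factorization of $P(u) = -\tfrac{1}{2}(u-1)(u^3 + u^2 + u - 1)$) is established. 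The honest bookkeeping is just checking the sign of the cubic factor on $(0,1)$.

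For $h(s) = e^{-s} - \tfrac{1}{4}e^{-s/2} - \tfrac{3}{4}e^{-3s/2}$, the slowest term is $e^{-s/2}$, so I would substitute $v = e^{-s/2}$, giving $h(s) = v^2 - \tfrac{1}{4}v - \tfrac{3}{4}v^3$. For $s > 6$ we have $v < e^{-3} < 1$, and the claim reduces to showing $Q(v) := v^2 - \tfrac{1}{4}v - \tfrac{3}{4}v^3 < 0$, i.e. $-\tfrac{1}{4}v\,(3v^2 - 4v + 1) < 0$. Since $v > 0$, this is equivalent to $3v^2 - 4v + 1 > 0$, and this quadratic factors as $(3v - 1)(v - 1)$, which is positive precisely when $v < \tfrac{1}{3}$ (or $v > 1$). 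The threshold $s = 6$ is exactly calibrated so that $v = e^{-3} \approx 0.0498 < \tfrac{1}{3}$, hence the inequality holds with room to spare on the whole ray $s > 6$.

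The only real subtlety, and the step I expect to require the most care, is matching the thresholds. The bounds $s > 2$ and $s > 6$ are not arbitrary: they correspond exactly to the values of $u = e^{-s/3}$ and $v = e^{-s/2}$ crossing the roots $u = 1$ and $v = \tfrac{1}{3}$ of the respective polynomial factors. For $g$ the boundary is the double root structure near $u = 1$ (where $g$ vanishes in the limit $s \to 0$, not at $s=2$), so I would double-check whether the stated threshold $s=2$ is sharp or merely sufficient — it suffices because $e^{-2/3} < 1$ guarantees $u$ lies safely below the root. For $h$, the factorization $(3v-1)(v-1)$ makes the threshold transparent. In both cases the proof is elementary once the exponential-to-polynomial substitution is made, and no appeal to earlier results in the paper is needed; the estimates are purely real-analytic lemmas serving the later convexity argument.
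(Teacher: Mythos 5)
Your treatment of $h$ is correct and coincides with the paper's argument: with $v=e^{-s/2}$ one gets $h(s)=-\tfrac14 v(3v-1)(v-1)$, and $v<e^{-3}<\tfrac13$ makes both linear factors negative, hence $h<0$ on the stated ray.

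The $g$ part, however, has a genuine gap. Your factorization $P(u)=u-\tfrac12 u^4-\tfrac12=-\tfrac12(u-1)(u^3+u^2+u-1)$ is the same decomposition the paper uses, but your sign analysis of it is wrong: $P$ is \emph{not} negative on all of $(0,1)$. The cubic factor $u^3+u^2+u-1$ changes sign at its unique positive root $u^*\approx 0.5437$, so $P<0$ on $(0,u^*)$ but $P>0$ on $(u^*,1)$; for instance $P(0.8)=0.8-\tfrac12(0.8)^4-\tfrac12\approx 0.095>0$. The monotonicity inference fails for the same reason: $P$ increases on $(0,2^{-1/3})$ and then decreases to $P(1)=0$, so increasingness on $(0,2^{-1/3})$ gives no comparison with $P(1)$ at all --- in fact $P(2^{-1/3})\approx 0.095>0$. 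Consequently your closing remark that the threshold ``suffices because $e^{-2/3}<1$'' is also incorrect: what is actually needed is $e^{-2/3}<u^*$, and this is a tight margin ($e^{-2/3}\approx 0.513$ versus $u^*\approx 0.544$; equivalently, $g(s)>0$ for $0<s\lesssim 1.83$, so there is little room to spare at $s=2$). The missing step --- and it is exactly what the paper supplies --- is a verification that the cubic is still negative at the right endpoint of the relevant interval: since $u^3+u^2+u-1$ is increasing,
\begin{equation}
    u^3+u^2+u-1 \;\le\; e^{-2}+e^{-4/3}+e^{-2/3}-1 \;<\; -0.087 \;<\; 0
    \qquad \text{for } 0<u\le e^{-2/3},
\end{equation}
so all three factors of $P(u)=-\tfrac12(u-1)(u^3+u^2+u-1)$ carry the signs $(-)(-)(-)$ and $P<0$ there. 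With this numerical check inserted in place of the claim about $(0,1)$, your argument closes and is identical to the paper's proof.
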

\begin{proof}
    We start with the inequality for $g$. Setting $t=e^{-s/3}$, it suffices to verify
    $$
     -t^6+2t^3-t^2 < 0 \quad \text{ for  } t \in (0,e^{-2/3}).
    $$
    Note that for $0<t\le e^{-2/3}<1$ we have
    $$-1 \, < \, t^3+t^2+t-1 \, < \, e^{-2}+e^{-4/3}+e^{-2/3}-1 \, < \, -0.087.$$
    Therefore, since
    $$
    -t^6+2t^3-t^2 = -t^2(t-1)(t^3+t^2+t-1),
    $$
    we get the required inequality for $g$. For the function $h$, we set $t= e^{-s/2}$ and verify
    \begin{equation}
        -3t^3+4t^2-t<0 \quad \text{ for } t \in (0,e^{-3}).
    \end{equation}
    Since  $(0,e^{-3}) \subset (0, {1/3})$ we obtain
    \begin{equation}
       -3t^3+4t^2-t = -t(1-t)(1-3t) <0.
    \end{equation}
\end{proof}

\begin{proof}[Proof of Theorem \ref{thm:hex_vs_honeycomb}]
    This time we write the energies with $\pi$ in the exponent;
    \begin{align}
        E_{\phi_{\pi \alpha}}(\Lambda_2)
        & = \sum_{k,l \in \Z} e^{-\frac{2}{\sqrt{3}} \pi \alpha (k^2+kl+l^2)} - 1,\\
        E_{\phi_{\pi \alpha}}(\Gamma_\textnormal{\hexagon})
        & = E_{\phi_{\pi \alpha}}(\sqrt{2}\Lambda_2) + \sum_{\lambda \in \Lambda_2} e^{-\pi (2\alpha) |\lambda+x_0|^2}\\
        & = E_{\phi_{\pi \alpha}}(\sqrt{2}\Lambda_2) + \frac{1}{2 \alpha} \sum_{\lambda \in \Lambda_2} e^{-\frac{\pi}{2\alpha} |\lambda|^2} e^{2 \pi i \sigma(\lambda,x_0)}.
    \end{align}
    For the last equality, we used the symplectic version of the Poisson summation formula. This is a nice bookkeeping trick in dimension 2 (see \cite[Sec.~1.1]{BetFau23}, \cite[App.~B]{BetFauSte21}, \cite[Sec.~4.4]{FauGumSha22}) as the dual lattice is replaced by a 90 degrees rotated version called the adjoint lattice $\Lambda^\circ$. In particular, in dimension 2 any lattice (of density 1) is its own adjoint (up to scaling for density other than 1). This also requires a change of variables in the complex exponential. The Euclidean inner product $\lambda \cdot x$ is replaced by the skew-symmetric form $\sigma(\lambda, x)$;
    \begin{equation}
        \sigma(\lambda,x) = \lambda \cdot J x,
        \quad \text{ where } \quad
        \lambda, \, x \in \R^2, \; J =
        \begin{pmatrix*}[r]
            0 & -1\\
            1 & 0
        \end{pmatrix*},
        \quad
        \Lambda^\circ = J \Lambda^\perp.
    \end{equation}
    Now we can use the cubic AGM of Borwein and Borwein \cite{BorBor91} to write the (symplectic) Fourier series in terms of the energy on scaled hexagonal lattices. For the sake of self-containment, we present necessary parts from \cite{BorBor91}. For $|q|<1$, $\zeta_3^3=1$, $\zeta_3 \neq 1$~set
    \begin{equation}
        a(q) = \sum_{k,l \in \Z}q^{k^2+kl+l^2}, \quad b(q) = \sum_{k,l\in\Z} q^{k^2+kl+l^2} \zeta_3^{k-l},
        \quad \text{ then } \quad
        3 a(q^3) = a(q) + 2 b(q).
    \end{equation}
    At this point, we also refer to \cite{FauGumSha22} for some geometric explanations. By using the above equation, which is the arithmetic part of the cubic AGM, we get
    \begin{align}
         E_{\phi_{\pi \alpha}}(\Gamma_\textnormal{\hexagon})
         & = E_{\phi_{\pi \alpha}}(\sqrt{2} \Lambda_2) + \frac{1}{4 \alpha} \left( 3 (E_{\phi_{3\pi/(2\alpha)}}(\Lambda_2)+1) - (E_{\phi_{\pi/(2\alpha)}}(\Lambda_2) +1)\right)\\
         & = E_{\phi_{\pi \alpha}}(\sqrt{2} \Lambda_2) + \frac{1}{2} \left( (E_{\phi_{(2\alpha)\pi/3}}(\Lambda_2)+1) - (E_{\phi_{(2\alpha) \pi}}(\Lambda_2) +1)\right)\\
         & = E_{\phi_{\pi \alpha}}(\sqrt{2} \, \Lambda_2) + \frac{1}{2} \left( E_{\phi_{\alpha\pi}}(\sqrt{2/3} \, \Lambda_2) - E_{\phi_{\alpha \pi}}(\sqrt{2}\Lambda_2) \right)\\
         & = \frac{1}{2} \left( E_{\phi_{\pi \alpha}}(\sqrt{2} \, \Lambda_2) + E_{\phi_{\alpha\pi}}(\sqrt{2/3} \, \Lambda_2)\right),
    \end{align}
    where we used the Poisson summation formula in the second step and the ``density trick" from \eqref{eq:density} in the third step. We need to show that, for all $\alpha > 0$, we have
    \begin{equation}\label{eq:goal1}
        E_{\phi_{\alpha\pi}}(\Lambda_2) < \frac{1}{2} \left( E_{\phi_{\pi \alpha}}(\sqrt{2} \, \Lambda_2) + E_{\phi_{\alpha\pi}}(\sqrt{2/3} \, \Lambda_2)\right),
    \end{equation}
    which by the Poisson summation formula is equivalent to 
    \begin{equation}\label{eq:goal2}
         E_{\phi_{\pi/\alpha}}(\Lambda_2) < \frac{1}{2} \left( \frac{1}{2}E_{\phi_{\pi /\alpha}}(\sqrt{1/2} \,\Lambda_2) + \frac{3}{2} E_{\phi_{\pi/\alpha}}(\sqrt{3/2} \, \Lambda_2)\right).
    \end{equation}
    So, the hexagonal lattice $\Lambda_2$ of density 1 has smaller energy than a weighted average of its scaled versions of density $1/2$ and $3/2$. This is almost a consequence of the convexity of the energy in $\alpha$, but not quite. Writing out the formulas, one sees that we need a kind of convexity result in $1/\alpha$. We proceed by comparing terms where the quadratic form $k^2+kl+l^2$ is constant in inequalities \eqref{eq:goal1} and \eqref{eq:goal2}. Observe that the inequality in  \eqref{eq:goal1} can be rewritten as
    \begin{align}\label{eq:goal1_1}
        \sum\limits_{(k,l)\in\Z^2 \setminus\{ (0,0)\}} g\left(\tfrac{2\pi \alpha}{\sqrt{3}} (k^2+kl+l^2)\right) < 0,
    \end{align}
    where $f$ is defined as in \eqref{eq:g} and the inequality in \eqref{eq:goal2} is equivalent to 
    \begin{align}\label{eq:goal2_2}
        \sum\limits_{(k,l)\in\Z^2 \setminus\{ (0,0)\}} h\left(\tfrac{2\pi}{\sqrt{3}\alpha} (k^2+kl+l^2)\right) < 0,
    \end{align}
    where $g$ is defined as in \eqref{eq:h}. It clearly suffices to prove that each summand is negative to conclude the claim. We make two case distinctions.
    
    \textbf{Case 1: $\alpha\geq\frac{\sqrt{3}}{\pi}$.} In this case, for all $(k,l)\in\Z^2\setminus \{(0,0)\}$ we have
    \begin{equation}
        \tfrac{2\pi \alpha}{\sqrt{3}} (k^2+kl+l^2) \geq 
        \tfrac{2\pi}{\sqrt{3}}\cdot\tfrac{\sqrt{3}}{\pi} =2,    
    \end{equation}
     so by Lemma \ref{lemma:aux_estimate}, the inequality \eqref{eq:goal1_1} holds for all $\alpha\geq \frac{\sqrt{3}}{\pi}$.
    
     \textbf{Case 2: $\alpha\leq \frac{\sqrt{3}}{\pi}$.} In this case,
     \begin{equation}
        \tfrac{2\pi}{\sqrt{3}\alpha}(k^2+kl+l^2) \geq 
        \tfrac{2\pi}{\sqrt{3}}\cdot\tfrac{\pi}{\sqrt{3}} = \tfrac{2\pi^2}{3}>6,    
    \end{equation}
     so by Lemma \ref{lemma:aux_estimate}, the inequality \eqref{eq:goal2_2} holds for all $\alpha\leq \frac{\sqrt{3}}{\pi}$. In combination, this means that \eqref{eq:goal1} and \eqref{eq:goal2} hold for all $\alpha > 0$.    
\end{proof}

\begin{proof}[Proof of Corollary \ref{cor:2d}]
    The proof is just accumulating existing and derived results.

    \smallskip
    \noindent
    - The set $\mathbf{\Lambda}$: by Theorem \ref{thm:Montgomery}, $\Lambda_2$ is optimal in $\mathbf{\Lambda}$ for all $\alpha >0$.

    \smallskip
    \noindent
    - The set $\mathbf{\Gamma}_\times$: by Theorem \ref{thm:tensor}, which is a consequence of Theorem \ref{thm:Cohn-Kumar}, we know that the optimal structure in the set $\mathbf{\Gamma}_\times$ needs to be a rectangular lattice. Among these, the square lattice yields the smallest energy (\cite{Mon88}, see also \cite{FauSte17}) and by Theorem~\ref{thm:Montgomery} the hexagonal lattice has smaller energy.

    \smallskip
    \noindent
    - The set $\mathbf{\Gamma}_2$: let $\Gamma_x$ be the union of two rectangular lattices as in \eqref{eq:Gamma_x}. For $x_0 = \delta \mathbf{\frac{1}{2}}$, it is not hard to see that $\Gamma_{x_0}$ is a lattice
    \begin{equation}
        \Gamma_{x_0} = \delta \Z^2 \cup \delta \left(\Z^2 + \mathbf{\frac{1}{2}}\right) =
        \begin{pmatrix}
            \delta_1 & \frac{\delta_1}{2}\\[.6ex]
            0 & \frac{\delta_2}{2}
        \end{pmatrix}
        \Z^2.
    \end{equation}
    By Theorem \ref{thm:Montgomery} the hexagonal lattice minimizes energy among lattices and by Proposition \ref{pro:energy_2_lattices}, the minimizing configuration $\Gamma_x$ needs to be the lattice $\Gamma_{x_0}$. The hexagonal lattice can be written as a union of two rectangular lattices with $\delta_2/\delta_1 = \sqrt{3}$. Now let $\Gamma_2(x)$ be the union of two hexagonal lattices. By Proposition \ref{pro:honeycomb}, the optimal structure is the honeycomb $\Gamma_\textnormal{\hexagon}$ and by Theorem \ref{thm:hex_vs_honeycomb}, it has higher energy than the hexagonal lattice $\Lambda_2$ for all $\alpha > 0$.
\end{proof}

\subsection{Union of 3 lattices}
Analyzing the union of 3 arbitrary lattices is a too challenging endeavor for this note. So we will focus on some particular cases. Note that the union of 3 hexagonal lattices, i.e., $\Lambda_2 \cup (\Lambda_2 + x) \cup (\Lambda_2 + y)$ does not produce too many interesting results: the optimal shifts are the deep holes $x=x_0$ and $y=-x_0$ defined by \eqref{eq:deep_hole}. This means that the optimal configuration of 3 shifted hexagonal lattices is again a hexagonal lattice (of thrice the density). Therefore, we will concentrate on the union of 3 square lattices. In this case, the problem becomes surprisingly challenging. The energy of the set
\begin{equation}\label{eq:union3_Z2}
\Gamma_{x,y} = \Z^2 \cup (\Z^2+x)\cup (\Z^2+y)
\end{equation}
is, for $x=(x_1,x_2)$, $y=(y_1,y_2)$ and $f(x) = f_\Z(x;\alpha)$ ($f_\Z$ defined by \eqref{eq:f_Gamma}), given by 
\begin{equation}\label{eq:union3_xy_energy}
        E_{\phi_{\alpha \pi}}(\Gamma_{x,y}) = f(0)^2+\tfrac{2}{3} \left( f(x_1) f(x_2)+ f(y_1) f(y_2) + f(x_1-y_1) f(x_2-y_2) \right) -1.
\end{equation}
Thus, it suffices to determine the minimizers $(x,y)\in[0,1]^2$ of the objective function
\begin{equation}
    F(x,y)= f(x_1) f(x_2)+ f(y_1) f(y_2) + f(x_1-y_1) f(x_2-y_2).
\end{equation}
The critical points of $F$ are the solutions to the system of equations
\begin{align}
    f'(x_1) f(x_2) & = - f'(x_1-y_1) f(x_2-y_2), & &
    f(x_1) f'(x_2) = - f(x_1-y_1) f'(x_2-y_2), \\
    f'(y_1) f(y_2) & = f'(x_1-y_1) f(x_2-y_2), & &
    f(y_1) f'(y_2) = f(x_1-y_1) f'(x_2-y_2).
\end{align}
Adding the two left as well as the two right equations, we see that
\begin{equation}\label{eq:trivial_solutions}
    f'(y_1) f(y_2) = - f'(x_1) f(x_2)
    \quad \text{ and } \quad
    f(y_1) f'(y_2) = - f(x_1) f'(x_2).
\end{equation}
Since $f(x) = f(-x)$, we have $f'(x) = -f'(-x)$ and a trivial solution of the equations in \eqref{eq:trivial_solutions} is clearly given by $x=-y$. This prompted us to restrict to configurations of type $\Gamma_{x,-x}$. Note that the solutions of \eqref{eq:trivial_solutions} are periodic with respect to $\Z^2$. So we may assume $x \in [0,1]^2$ and to stay in the unit square, instead of $-x$, we use the solution $x^*=(1,1)-x$. The remaining restrictions are then given by
\begin{equation}\label{eq:critical_3points}
    f'(x_1)f(x_2) + f'(2 x_1)f(2 x_2) = 0
    \quad \text{ and } \quad
    f(x_1)f'(x_2) + f(2 x_1)f'(2 x_2) = 0.
\end{equation}
Due to symmetry and periodicity, solutions are given by the pairs
\begin{equation}\label{eq:solutions}
    x_0=(\tfrac{1}{3}, \tfrac{1}{3})
    \, \text{ and } \,
    x_0^*=(\tfrac{2}{3}, \tfrac{2}{3})
    \quad \text{ or by } \quad
    \widetilde{x}_0 = (\tfrac{1}{3}, \tfrac{2}{3})
    \, \text{ and } \,
    \widetilde{x}_0^* = (\tfrac{2}{3}, \tfrac{1}{3}).
\end{equation}
Thus, it is worthwhile to examine the energy of
\begin{equation}
    \Gamma_{x_0,x_0^*} = \Z^2 \cup (x_0+\Z^2)\cup (x_0^*+\Z^2) \cong \Gamma_{\widetilde{x}_0, \widetilde{x}_0^*}.
\end{equation}
\subsubsection{Open problem}
Is it true that (by periodicity $x,y \in [0,1]^2$) the configuration minimizing $E(\Gamma_{x,y})$ in \eqref{eq:union3_Z2} shows the symmetry $y = x^* = (1,1)-x$?
Assume that it indeed does.
Then the pairs $x_0$, $x_0^*$, and $\widetilde{x}_0$, $\widetilde{x}_0^*$ always yield critical configurations. What are other critical configurations? Take, e.g., $\alpha = 3.5$, $z=(1/4,1/2)$ and $z^*=(1,1)-z$ (note that $\Gamma_{z,z^*}$ is not critical). Using \textit{Mathematica} \cite{Mathematica} and its built-in function \textsf{EllipticTheta} (which can be evaluated to any precision), we get
\begin{equation}
    E_{\phi_{3.5\pi}}(\Gamma_{z,z^*}) \approx 0.17159 < 0.18279 \approx E_{\phi_{3.5\pi}}(\Gamma_{x_0,x_0^*}).
\end{equation}

\subsubsection{Asymptotic solution for small parameters}
We are now assuming that $y = x^*$. If $\alpha$ tends to 0, then we can prove that the only solutions in $[0,1]^2 \times [0,1]^2$ are indeed the pairs $(x_0,x_0^*)$ or $(\widetilde{x}_0,\widetilde{x}_0^*)$. We start with an auxiliary result for $f(x) = f_\Z(x;\alpha)$.
\begin{lemma}\label{lem:f_alpha}
    We define the following functions for $\alpha > 0$.
    \begin{equation}
        g(\alpha) =
        \begin{cases}
            \alpha^{-\frac{1}{2}} \left( 1 - \frac{22002}{11000} \, e^{-\frac{\pi}{\alpha}} \right) & \alpha < 1,\\
            2 e^{-\frac{\pi \alpha}{4}} & 1 \leq \alpha.
        \end{cases}
        \quad
        h(\alpha) =
        \begin{cases}
            \alpha^{-\frac{1}{2}} \left( 1 + \frac{22002}{11000} e^{-\frac{\pi}{\alpha}} \right) & \alpha < 1,\\
            1 + \frac{22002}{11000} e^{-\pi \alpha} & 1 \leq \alpha.
        \end{cases}
    \end{equation}
    Then, for any $x \in \R$ the following inequality is true:
    $
        g(\alpha) \leq f_\Z(x;\alpha) \leq h(\alpha).
    $
\end{lemma}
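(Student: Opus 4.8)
The plan is to exploit the two dual series for $f_\Z$ and to reduce both bounds, in both regimes, to one elementary tail estimate. In this subsection $f_\Z(x;\alpha)=\sum_{n\in\Z}e^{-\pi\alpha(n+x)^2}$, and Poisson summation together with $\F\phi_{\pi\alpha}=\alpha^{-1/2}\phi_{\pi/\alpha}$ gives the dual representation
\[
    f_\Z(x;\alpha)=\alpha^{-1/2}\sum_{k\in\Z}e^{-\pi k^2/\alpha}e^{2\pi ikx}=\alpha^{-1/2}\Big(1+2\sum_{k\geq1}e^{-\pi k^2/\alpha}\cos(2\pi kx)\Big).
\]
The spatial form converges rapidly for large $\alpha$ and the dual form for small $\alpha$; the split at $\alpha=1$ in the definitions of $g$ and $h$ is arranged precisely so that in each case the relevant series has base $q\leq e^{-\pi}$.

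The only genuine computation is the tail bound: for all $0<q\leq e^{-\pi}$,
\[
    2\sum_{k\geq1}q^{k^2}\leq\tfrac{22002}{11000}\,q.
\]
I would prove this by isolating the leading term, $2\sum_{k\geq1}q^{k^2}=2q\big(1+\sum_{k\geq2}q^{k^2-1}\big)$, and dominating the remainder by a geometric series. Since $k^2-1\geq3+5(k-2)$ for every integer $k\geq2$ (because $(k-2)(k-3)\geq0$), one has $\sum_{k\geq2}q^{k^2-1}\leq q^3\sum_{j\geq0}q^{5j}=q^3/(1-q^5)$, and for $q\leq e^{-\pi}$ this is at most $e^{-3\pi}/(1-e^{-5\pi})<\tfrac{1}{11000}$; multiplying back by $2q$ gives the constant $2+\tfrac{2}{11000}=\tfrac{22002}{11000}$.

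It then remains to assemble. By Lemma \ref{lem:theta_triple} the function $x\mapsto f_\Z(x;\alpha)$ attains its maximum at $x\in\Z$ and its minimum at $x\in\Z+\tfrac12$. For $\alpha\geq1$ I use the spatial form: the upper bound follows from $f_\Z(x;\alpha)\leq f_\Z(0;\alpha)=1+2\sum_{k\geq1}e^{-\pi\alpha k^2}$ and the tail estimate with $q=e^{-\pi\alpha}\leq e^{-\pi}$, while the lower bound follows from $f_\Z(x;\alpha)\geq f_\Z(\tfrac12;\alpha)=2\sum_{m\geq0}e^{-\pi\alpha(m+1/2)^2}\geq2e^{-\pi\alpha/4}$, discarding the remaining positive terms. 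For $\alpha<1$ I use the dual form: bounding $\cos(2\pi kx)$ by $\pm1$ gives $\alpha^{-1/2}\big(1-2\sum_{k\geq1}e^{-\pi k^2/\alpha}\big)\leq f_\Z(x;\alpha)\leq\alpha^{-1/2}\big(1+2\sum_{k\geq1}e^{-\pi k^2/\alpha}\big)$, and the tail estimate with $q=e^{-\pi/\alpha}<e^{-\pi}$ (since $1/\alpha>1$) yields exactly $g(\alpha)\leq f_\Z(x;\alpha)\leq h(\alpha)$. There is no serious obstacle here; the entire weight of the argument rests on choosing $\tfrac{22002}{11000}$ large enough to absorb the super-geometric tail once the base has been localized to $q\leq e^{-\pi}$, which comes down to the single numerical check $e^{-3\pi}/(1-e^{-5\pi})<\tfrac1{11000}$.
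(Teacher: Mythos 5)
Your proof is correct and follows essentially the same route as the paper's: split at $\alpha=1$, use Lemma~\ref{lem:theta_triple} together with the spatial series for $\alpha\geq 1$, use Poisson summation for $\alpha<1$, and absorb the super-geometric tail into the constant $\tfrac{22002}{11000}$ via a geometric majorant plus the numerical check that the tail stays below $\tfrac{1}{11000}$. The only cosmetic differences are that you package the tail bound as a single estimate in $q$ (with majorant ratio $q^5$, where the paper uses $k^2-1\geq k+1$ and ratio $e^{-\pi}$) and that, for $\alpha<1$, you bound $\cos(2\pi kx)$ by $\pm 1$ in the dual series rather than first reducing to $x\in\{0,\tfrac12\}$ and then applying Poisson summation there.
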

\begin{proof}
    As we have $f_\Z(1/2;\alpha) \leq f_\Z(x;\alpha) \leq f_\Z(0;\alpha)$ (Lemma \ref{lem:theta_triple}), it suffices to show the estimates for $x=1/2$ and $x=0$, respectively. We distinguish two cases:
    
    \noindent
    ($\alpha \geq 1$): For the upper estimate, we have
    \begin{align*}
        f_\Z(0;\alpha) 
        & = 1 + 2 \sum_{k \geq 1} e^{-\pi \alpha k^2} \leq 1 + 2 e^{-\pi \alpha} \sum_{k \geq 1} e^{-\pi (k^2-1)}
        \leq 1 + 2 e^{-\pi \alpha} \bigg( 1 + \sum_{k \geq 3} e^{-\pi k} \bigg)\\
        & = 1 + 2 e^{-\pi \alpha} \left( 1 + \tfrac{e^{-3 \pi}}{1 - e^{-\pi}} \right)
        \leq 1 + 2 e^{-\pi \alpha} \left( 1 + \tfrac{1}{11000} \right).
    \end{align*}
    As a lower estimate we simply use the leading term(s);
    \begin{equation}
        f_\Z(\tfrac{1}{2};\alpha) = 2\sum_{k \geq 1} e^{-\pi \alpha (k-1/2)^2} \geq 2 e^{- \frac{\pi \alpha}{4}}.
    \end{equation}
    ($\alpha < 1$): We use the Poisson summation formula and the fact from above that $\frac{e^{-3 \pi}}{1 - e^{-\pi}} \leq \frac{1}{11000}$. We start with the upper estimate:
    \begin{align}
        \sqrt{\alpha} \, f_\Z(0;\alpha) & = 1 + 2 \sum_{k \geq 1} e^{-\frac{\pi}{\alpha} k^2} \leq 1 + 2 e^{-\frac{\pi}{\alpha}} \sum_{k \geq 1} e^{-\frac{\pi}{\alpha} (k^2-1)}\\
        & \leq 1 + 2 e^{-\frac{\pi}{\alpha}} \bigg( 1 + \sum_{k \geq 3} e^{-\pi k} \bigg) \leq 1 + \tfrac{22002}{11000} \, e^{-\frac{\pi}{\alpha}}.
    \end{align}
    For the lower estimate we obtain
    \begin{equation}
        \sqrt{\alpha} \, f_\Z(\tfrac{1}{2};\alpha) = 1 + 2 \sum_{k \geq 1} (-1)^k e^{-\frac{\pi}{\alpha} k^2}
        2\geq 1 - 2 e^{-\frac{\pi}{\alpha}} \bigg( 1 + \sum_{k \geq 3} e^{-\pi k} \bigg) \geq 1 - \tfrac{22002}{11000} \, e^{-\frac{\pi}{\alpha}}.
    \end{equation}
\end{proof}
Setting (again) $f(x) = f_\Z(x;\alpha)$, we analyze $f'(x)$ to find solutions for \eqref{eq:critical_3points}. Using $\sqrt{\alpha} f(x) = \theta_3(x;1/\alpha)$, we get a triple product representation for $f$ (cf.~\cite{Mon88});
\begin{equation}
    \sqrt{\alpha} \, f(x) = \prod_{k \geq 1} \left(1 - e^{- 2 k \pi /\alpha} \right) \bigg(1 + 2 \, e^{-(2k-1)\pi/\alpha} \cos(2 \pi x) + e^{-2(2k-1)\pi/\alpha} \bigg).
\end{equation}
For the derivative with respect to $x$ we introduce the function $Q(x;\alpha)$ from \cite{Mon88}:
\begin{align}
    Q(x;\alpha) = 4 \pi \sum_{l \geq 1} & \left(1 - e^{-2 \pi l \alpha} \right) e^{-(2l-1) \pi \alpha}\\
    & \times \prod_{\substack{k \geq 1 \\ k \neq l}} \left(1 - e^{-2 \pi k \alpha}\right)
    \left( 1 + 2 e^{-(2k-1)\pi \alpha} \cos(2 \pi x) + e^{-2(2k-1)\pi \alpha} \right).
\end{align}
The function $f'(x)$ then satisfies
\begin{equation}
    \sqrt{\alpha} \, f'(x) = - \sin(2 \pi x) \, Q(x;\tfrac{1}{\alpha}).
\end{equation}
The function $Q(x;\alpha)$ was analyzed in detail by Montgomery \cite[Lem.~1]{Mon88}: it is 1-periodic, even, positive, strictly decreasing on $(0,1/2)$ and strictly increasing on $(1/2,1)$. In particular, it assumes global maxima in $\Z$ and global minima in $\Z+1/2$. Moreover, we have the following estimates obtained by Montgomery \cite[Lem.~2]{Mon88}.
\begin{lemma}\label{lem:Montgomery}
    We define the following functions for $\alpha > 0$.
    \begin{align}
        A(\alpha) =
        \begin{cases}
            \alpha^{-3/2} e^{-\pi/(4\alpha)}, & \alpha < 1,\\
            \left(1 - \frac{1}{3000} \right) 4 \pi e^{-\pi \alpha}, & 1 \leq \alpha.
        \end{cases}
        \quad
        B(\alpha) =
        \begin{cases}
            \alpha^{-3/2}, & \alpha < 1,\\
            \left(1 + \frac{1}{3000} \right) 4 \pi e^{-\pi \alpha}, & 1 \leq \alpha.
        \end{cases}
    \end{align}
    Then, for all $x$ the following estimates hold true:
    $
        A(\alpha) \leq Q(x;\alpha) \leq B(\alpha).
    $
\end{lemma}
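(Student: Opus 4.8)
The plan is to exploit the qualitative behavior of $Q(\cdot;\alpha)$ recorded immediately before the statement: that it is $1$-periodic, even, positive, and unimodal on $[0,1]$, with global maximum at $x=0$ and global minimum at $x=\tfrac12$. Since the candidate bounds $A(\alpha),B(\alpha)$ do not depend on $x$, it suffices to prove the two scalar inequalities $Q(0;\alpha)\le B(\alpha)$ and $Q(\tfrac12;\alpha)\ge A(\alpha)$. To make these tractable I would first replace the product series for $Q$ by an ordinary Fourier series. Combining $\sqrt{\alpha}\,f'(x)=-\sin(2\pi x)\,Q(x;1/\alpha)$ with $\sqrt{\alpha}\,f(x)=\theta_3(x;1/\alpha)$ gives $Q(x;\alpha)=-\theta_3'(x;\alpha)/\sin(2\pi x)$ (derivative in $x$), and differentiating $\theta_3(x;\alpha)=1+2\sum_{k\ge1}e^{-\pi\alpha k^2}\cos(2\pi kx)$ termwise yields, after taking the finite limits $\sin(2\pi kx)/\sin(2\pi x)\to k$ and $\to(-1)^{k-1}k$ at $x=0,\tfrac12$,
\[ Q(0;\alpha)=4\pi\sum_{k\ge1}k^2e^{-\pi\alpha k^2},\qquad Q(\tfrac12;\alpha)=4\pi\sum_{k\ge1}(-1)^{k-1}k^2e^{-\pi\alpha k^2}. \]

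For $\alpha\ge1$ both series converge geometrically and only the leading term matters. Factoring out $e^{-\pi\alpha}$, for the upper bound I would estimate the tail $\sum_{k\ge2}k^2e^{-\pi\alpha(k^2-1)}\le\sum_{k\ge2}k^2e^{-\pi(k^2-1)}=4e^{-3\pi}+9e^{-8\pi}+\cdots<\tfrac1{3000}$, giving $Q(0;\alpha)\le4\pi e^{-\pi\alpha}(1+\tfrac1{3000})=B(\alpha)$. For the lower bound, the terms $k^2e^{-\pi\alpha k^2}$ are strictly decreasing in $k$ (since $4e^{-3\pi\alpha}<1$), so the alternating series may be truncated after its first negative term: $Q(\tfrac12;\alpha)\ge4\pi(e^{-\pi\alpha}-4e^{-4\pi\alpha})\ge4\pi e^{-\pi\alpha}(1-4e^{-3\pi})\ge A(\alpha)$.

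For $\alpha<1$ the series converge slowly, so I would pass to the dual side via the Jacobi/Poisson identity $\theta_3(x;\alpha)=\alpha^{-1/2}\sum_{n}e^{-\pi(n+x)^2/\alpha}$. Differentiating in $x$ and taking the same limits at $0$ and $\tfrac12$ produces
\[ Q(0;\alpha)=\alpha^{-3/2}\sum_{n}\Big(1-\tfrac{2\pi n^2}{\alpha}\Big)e^{-\pi n^2/\alpha},\qquad Q(\tfrac12;\alpha)=\alpha^{-3/2}\sum_{n}\Big(\tfrac{2\pi(n+1/2)^2}{\alpha}-1\Big)e^{-\pi(n+1/2)^2/\alpha}. \]
In the first sum the $n=0$ term is exactly $\alpha^{-3/2}$ while every other term is negative (because $2\pi n^2/\alpha>1$ for $\alpha<1$), whence $Q(0;\alpha)<\alpha^{-3/2}=B(\alpha)$. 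In the second sum every term is positive (since $2\pi(n+\tfrac12)^2/\alpha\ge\pi/(2\alpha)>1$), so dropping all but the two minimal terms $n=0,-1$ gives $Q(\tfrac12;\alpha)\ge\alpha^{-3/2}(\pi/\alpha-2)e^{-\pi/(4\alpha)}\ge\alpha^{-3/2}e^{-\pi/(4\alpha)}=A(\alpha)$, using $\pi/\alpha-2\ge1$ for $\alpha<1$.

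The one delicate point is the regime $\alpha\ge1$, where the constant $\tfrac1{3000}$ is essentially sharp: the argument rests on $4e^{-3\pi}\approx3.21\cdot10^{-4}<3.33\cdot10^{-4}\approx\tfrac1{3000}$, which leaves little slack, so the tail estimate and the monotonicity justifying the alternating-series truncation must be done carefully. Everything else — the reduction to $x\in\{0,\tfrac12\}$ via unimodality, the two L'Hôpital limits, and the transparent sign patterns of the small-$\alpha$ sums — is routine once the Fourier and dual representations are in hand.
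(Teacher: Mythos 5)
Your proof is correct, but note that the paper itself contains no proof of this lemma: it is quoted verbatim from Montgomery \cite[Lem.~2]{Mon88}, so what you have produced is a self-contained derivation of a cited result rather than an alternative to an argument appearing in the paper. Your route is sound. The reduction to the two points $x=0$ and $x=\tfrac12$ is legitimate because the bounds are $x$-independent and the paper (again quoting Montgomery's Lemma~1) records that $Q(\cdot;\alpha)$ attains its global maximum on $\Z$ and its global minimum on $\Z+\tfrac12$; if you wanted to avoid even that citation, you could observe that each factor $1+2e^{-(2k-1)\pi\alpha}\cos(2\pi x)+e^{-2(2k-1)\pi\alpha}$ in the defining product of $Q$ is positive and increasing in $\cos(2\pi x)$, which yields $Q(\tfrac12;\alpha)\le Q(x;\alpha)\le Q(0;\alpha)$ directly. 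Your Fourier-side evaluations $Q(0;\alpha)=4\pi\sum_{k\ge1}k^2e^{-\pi\alpha k^2}$ and $Q(\tfrac12;\alpha)=4\pi\sum_{k\ge1}(-1)^{k-1}k^2e^{-\pi\alpha k^2}$ are correct (the interchange of limit and sum is covered by $|\sin(2\pi kx)|\le k\,|\sin(2\pi x)|$ together with Gaussian decay), and the numerics at the one delicate spot check out: $4e^{-3\pi}\approx 3.23\cdot 10^{-4}<\tfrac1{3000}\approx 3.33\cdot 10^{-4}$, while the ratio bound $(1+\tfrac1k)^2e^{-\pi\alpha(2k+1)}\le 4e^{-3\pi}<1$ for $\alpha\ge1$ justifies both the tail estimate and the alternating-series truncation. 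The dual-side computation for $\alpha<1$ is also right: in your expansion of $Q(0;\alpha)$ every term with $n\neq 0$ is negative since $2\pi n^2/\alpha>2\pi>1$, and in the expansion of $Q(\tfrac12;\alpha)$ every term is positive, the two terms $n=0,-1$ already giving $(\pi/\alpha-2)e^{-\pi/(4\alpha)}\ge e^{-\pi/(4\alpha)}$ because $\pi/\alpha-2>\pi-2>1$. This large-$\alpha$/small-$\alpha$ split with Poisson summation (Jacobi transformation) on the small-$\alpha$ side is essentially the strategy of Montgomery's original proof, so your argument fills in, rather than deviates from, the proof the paper outsources; the only external input you retain is the monotonicity statement of \cite[Lem.~1]{Mon88}, which, as noted above, can be made self-contained in one line from the product representation.
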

We use the factorization $\sqrt{\alpha} f'(x) = -\sin(2 \pi x) Q(x;1/\alpha)$ to find common points of the curves $c_1$ and $c_2$ (see Figure \ref{fig:curves}) to have solutions to \eqref{eq:critical_3points}:
\begin{equation}
    c_1: \frac{-f(x_2)}{f(2x_2)} = 2 \cos(2 \pi x_1) \frac{Q(2x_1,\tfrac{1}{\alpha})}{Q(x_1,\tfrac{1}{\alpha})}
    \; \text{ and } \;
    c_2: \frac{-f(x_1)}{f(2x_1)} = 2 \cos(2 \pi x_2) \frac{Q(2x_2,\tfrac{1}{\alpha})}{Q(x_2,\tfrac{1}{\alpha})}.
\end{equation}
\begin{figure}[htp]
    \hfill
    \begin{subfigure}[t]{.225\textwidth}
        \includegraphics[width=\textwidth]{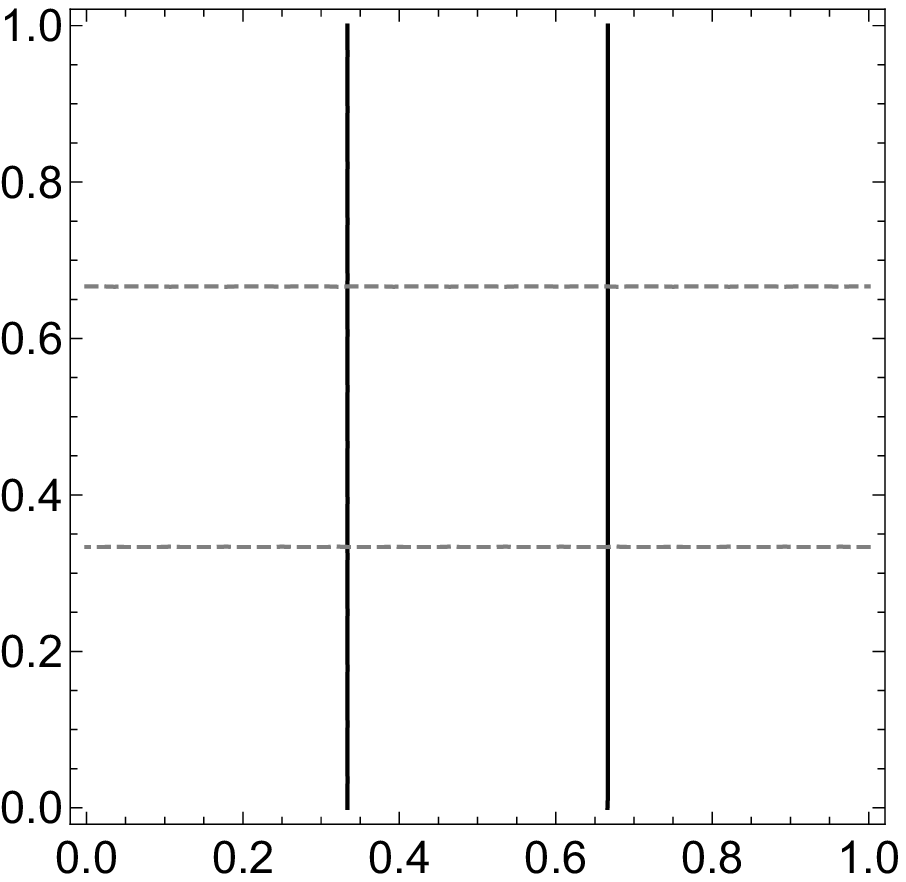}
        \caption{$\alpha = 0.1$}
    \end{subfigure}
    \hfill
    \begin{subfigure}[t]{.225\textwidth}
        \includegraphics[width=\textwidth]{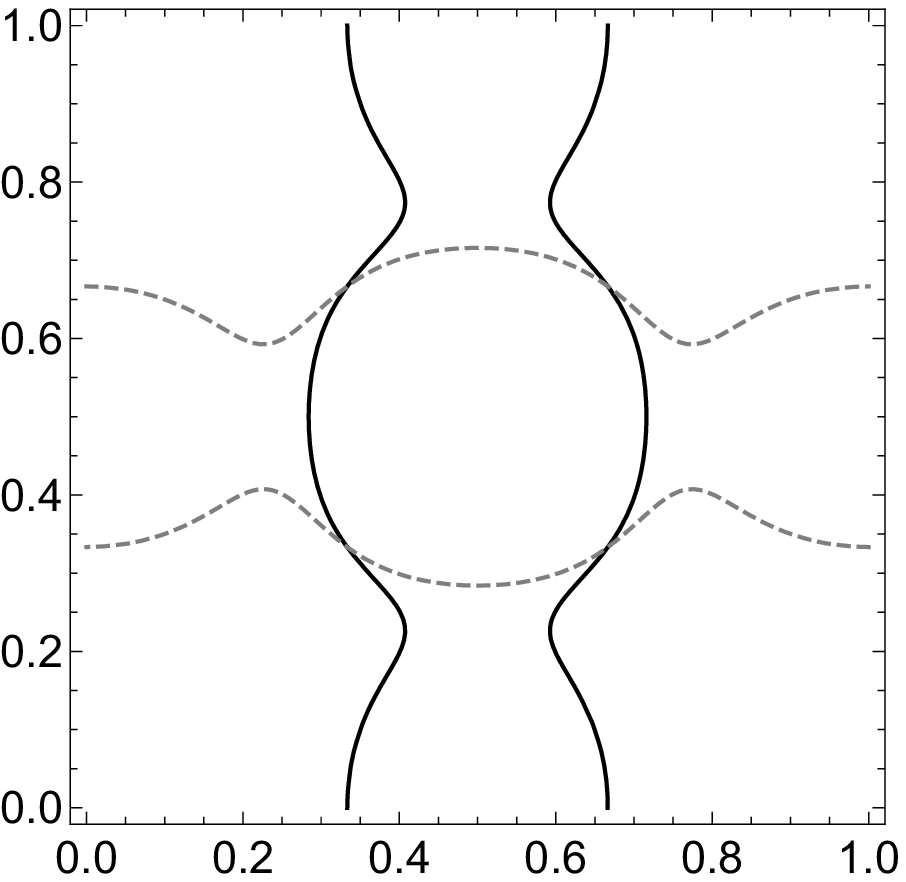}
        \caption{$\alpha = 2.0$}
    \end{subfigure}
    \hfill
    \begin{subfigure}[t]{.225\textwidth}
        \includegraphics[width=\textwidth]{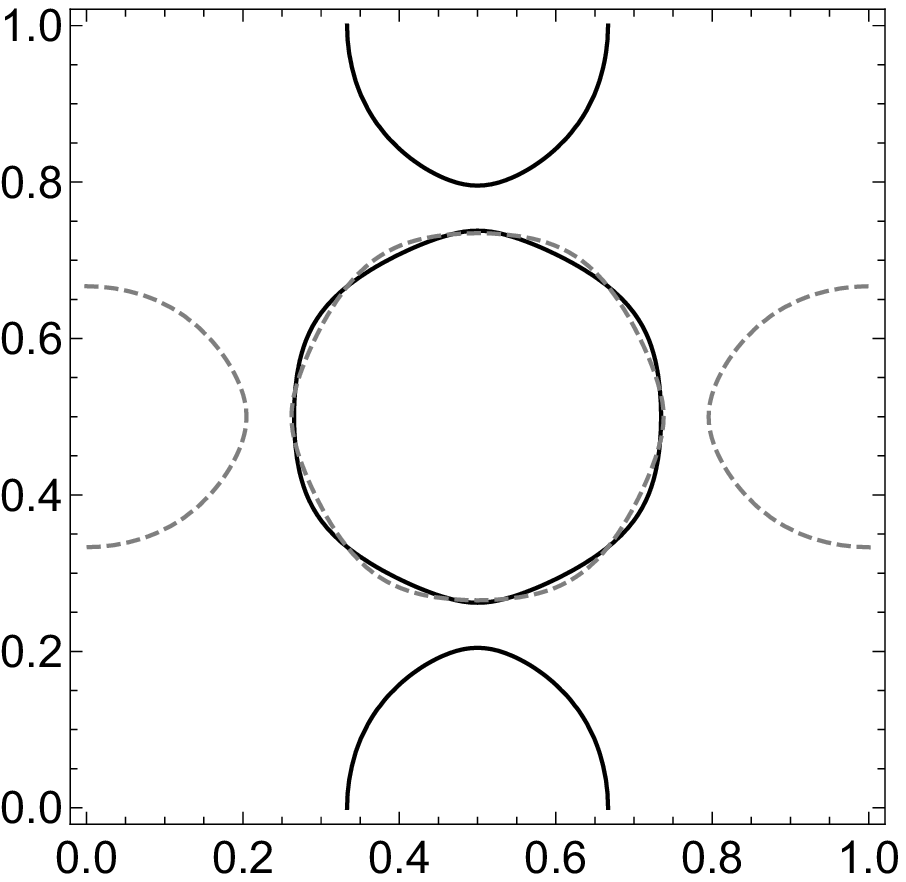}
        \caption{$\alpha = 3.5$}
    \end{subfigure}
    \hfill
    \begin{subfigure}[t]{.225\textwidth}
        \includegraphics[width=\textwidth]{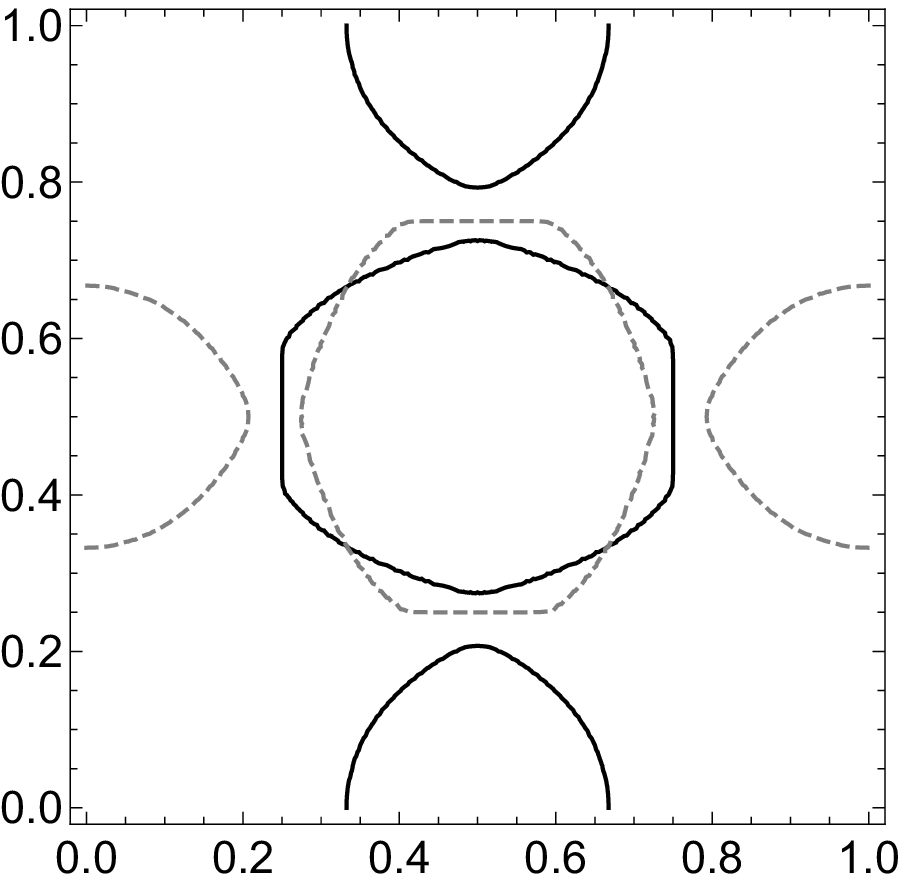}
        \caption{$\alpha = 30.0$}
    \end{subfigure}
    \hspace{\fill}
    
    \caption{$c_1$: \textbf{---}, and $c_2$: \textcolor{gray}{- - -}, in $[0,1]^2$. For most $\alpha$ we find four points of intersection, except for some $\alpha$ around 3.5. Plots created with \textit{Mathematica}~\cite{Mathematica}.}
    \label{fig:curves}
\end{figure}

($\alpha \to 0$): Let us analyze the case when $\alpha$ is asymptotically 0. By Lemma \ref{lem:f_alpha} and Lemma \ref{lem:Montgomery}, we conclude that for all $x \in \R$ we have
\begin{equation}
    - \frac{f(x)}{f(2x)} \asymp -1
    \quad \text{ and } \quad
    2 \cos(2 \pi x) \frac{Q(2x,\tfrac{1}{\alpha})}{Q(x,\tfrac{1}{\alpha})} \asymp 2 \cos(2 \pi x).
\end{equation}
as $\alpha$ tends to 0. Hence, asymptotically we get the following curves:
\begin{equation}
    c_1: -1 \asymp 2 \cos(2 \pi x_1), \, \forall x_2
    \quad \text{ and } \quad
    c_2: -1 \asymp 2 \cos(2 \pi x_2), \, \forall x_1.
\end{equation}
This proves that indeed, as $\alpha$ tends to 0, we only have the solutions $x_0$ and $x_0^*$ or the solutions $\widetilde{x}_0$ and $\widetilde{x}_0^*$ given by \eqref{eq:solutions}.

($\alpha \to \infty$): For $\alpha$ going to infinity we are not able to describe the exact shape of the limiting curves $c_1$ and $c_2$ (which are rotations of one another) in the unit square. The numerics tell us that $c_1$ and $c_2$ consist of two closed curves (seen periodically). Also, for any $\alpha$ the points $(\frac{1}{3},0)$, $(\frac{2}{3},0)$ are on $c_1$ and $(0,\frac{1}{3})$, $(0,\frac{2}{3})$ are part of $c_2$.

\subsubsection{Comparison}
Note that $\Gamma_{x_0,x_0^*}$ is a lattice and so by Theorem \ref{thm:Montgomery} it has lower energy than the hexagonal lattice $\Lambda_2$. Scaling $\Gamma_{x_0,x_0^*}$ to unit density, we may write
\begin{equation}
    \Gamma_{x_0,x_0^*} = \sqrt{3}
    \begin{pmatrix*}[r]
        \tfrac{1}{3} & -\tfrac{1}{3}\\[.6ex]
        \tfrac{1}{3} & \tfrac{2}{3}
    \end{pmatrix*}
    \Z^2 \cong \sqrt{\tfrac{2}{3}}
    \begin{pmatrix}
        1 & \tfrac{1}{2}\\[.6ex]
        0 & \tfrac{3}{2}
    \end{pmatrix} \Z^2,
\end{equation}
where the second version is a clockwise rotation of $\Gamma_{x_0,x_0^*} $ by 45 degrees. A question which arises is whether the square lattice $\Z^2$ may also have lower energy than $\Gamma_{x_0,x_0^*}$.

\begin{theorem}
    For all $\alpha>0$, $\Z^2$ has lower energy than the configuration $\Gamma_{x_0,x_0^*}$. 
\end{theorem}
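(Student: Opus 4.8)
The plan is to reduce the statement to a comparison of the two lattice theta functions and then to exploit a functional equation coming from isoduality. Both competitors are lattices of equal density, so after rescaling to covolume $1$ it suffices to compare $\Theta_{\Z^2}(\alpha)=\sum_{\lambda\in\Z^2}e^{-\pi\alpha|\lambda|^2}$ with $\Theta_{\Gamma_1}(\alpha)=\sum_{\gamma\in\Gamma_1}e^{-\pi\alpha|\gamma|^2}$, where $\Gamma_1\cong\sqrt{2/3}\left(\begin{smallmatrix}1 & 1/2\\ 0 & 3/2\end{smallmatrix}\right)\Z^2$ is the density-one version of $\Gamma_{x_0,x_0^*}$ recorded just above. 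Its Gram matrix is $\left(\begin{smallmatrix}2/3 & 1/3\\ 1/3 & 5/3\end{smallmatrix}\right)$, so the associated quadratic form is $\tfrac23 k^2+\tfrac23 kl+\tfrac53 l^2$, whose minimal nonzero value is $\tfrac23$, attained only at $\pm(1,0)$. The crucial numerical fact is $\tfrac23<1$: the shortest vector of $\Gamma_1$ is strictly shorter than that of $\Z^2$. Since $E_{\phi_{\pi\alpha}}(\Lambda)=\Theta_\Lambda(\alpha)-1$, the theorem is equivalent to $D(\alpha):=\Theta_{\Gamma_1}(\alpha)-\Theta_{\Z^2}(\alpha)>0$ for all $\alpha>0$.

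The key observation is that both lattices are isodual. For $\Z^2$ this is classical. For $\Gamma_1$ one computes the dual Gram matrix $\left(\begin{smallmatrix}2/3 & 1/3\\ 1/3 & 5/3\end{smallmatrix}\right)^{-1}=\left(\begin{smallmatrix}5/3 & -1/3\\ -1/3 & 2/3\end{smallmatrix}\right)$ and checks that it is $GL_2(\Z)$-equivalent to the primal one (swap the two basis vectors and flip a sign), whence $\Gamma_1^\perp\cong\Gamma_1$ and $\Theta_{\Gamma_1^\perp}=\Theta_{\Gamma_1}$. By the Poisson summation formula every covolume-one lattice satisfies $\Theta_\Lambda(\alpha)=\alpha^{-1}\Theta_{\Lambda^\perp}(1/\alpha)$, so isoduality yields $\Theta_{\Gamma_1}(\alpha)=\alpha^{-1}\Theta_{\Gamma_1}(1/\alpha)$, and the same equation holds for $\Z^2$. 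Subtracting gives $D(\alpha)=\alpha^{-1}D(1/\alpha)$. Consequently $D(\alpha)>0$ for $0<\alpha\le1$ follows from $D(1/\alpha)>0$, and it suffices to prove $D(\alpha)>0$ for all $\alpha\ge1$.

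For $\alpha\ge1$ I would use leading-term domination, which works precisely because $\tfrac23<1$. Keeping only the first shell of $\Gamma_1$ gives $\Theta_{\Gamma_1}(\alpha)-1\ge 2e^{-2\pi\alpha/3}$, while for $\Z^2$ one bounds the tail geometrically: writing $\Theta_{\Z^2}(\alpha)=\bigl(1+2\sum_{k\ge1}e^{-\pi\alpha k^2}\bigr)^2$ and using $\sum_{k\ge1}e^{-\pi\alpha k^2}\le e^{-\pi\alpha}\sum_{k\ge1}e^{-\pi(k^2-1)}\le 1.0001\,e^{-\pi\alpha}$ for $\alpha\ge1$, one gets $\Theta_{\Z^2}(\alpha)-1\le 4\cdot1.05\,e^{-\pi\alpha}$. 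Hence $D(\alpha)\ge 2e^{-2\pi\alpha/3}\bigl(1-2.1\,e^{-\pi\alpha/3}\bigr)$, and since $e^{-\pi\alpha/3}\le e^{-\pi/3}<0.351$ for $\alpha\ge1$, the bracket is positive. This closes the case $\alpha\ge1$ and, with the functional equation, proves $D(\alpha)>0$ for all $\alpha>0$.

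The main obstacle is the isoduality of $\Gamma_1$: without it the two theta functions would not share a functional equation and the reduction to $\alpha\ge1$ would collapse. Its verification is a short but essential $GL_2(\Z)$-reduction of the dual Gram matrix, worth isolating as a lemma. The remaining estimates are routine once the correct first shells are identified (squared length $\tfrac23$ for $\Gamma_1$ versus $1$ for $\Z^2$); the only place demanding a little care is the window near $\alpha=1$, where the crude geometric tail bound is nonetheless comfortably sufficient.
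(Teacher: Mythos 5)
Your proposal is correct and takes essentially the same route as the paper: a first-shell lower bound $2e^{-2\pi\alpha/3}$ for the unit-density version of $\Gamma_{x_0,x_0^*}$ against a geometric tail bound for $\Z^2$ when $\alpha\geq 1$, then a Poisson-summation functional equation to transfer positivity to $\alpha<1$. The only cosmetic difference is that you verify isoduality of your $\Gamma_1$ by an explicit $GL_2(\Z)$-reduction of the dual Gram matrix, whereas the paper invokes the general fact that every unit-density lattice in $\R^2$ is symplectic (its dual is a $90$-degree rotation of itself), which renders your lemma automatic.
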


\begin{proof}
The energy of $\Gamma_{x_0,x_0^*}$ can be estimated from below by 
\begin{equation}
	E_{\phi_{\pi \alpha}}(\Gamma_{x_0,x_0^*})
    = \sum_{k,l \in \Z} e^{-\frac{2 \pi \alpha}{3} \left(k^2 + k l + \frac{5}{2} l^2\right)} - 1
    \geq 2 \, e^{-\frac{2 \pi \alpha}{3}}.    
\end{equation}
By Lemma \ref{lem:f_alpha}, for $\alpha\geq 1$, it suffices to show
\begin{equation}
    E_{\phi_{\pi\alpha}}(\Z^2) = f_{\Z}(0,\alpha)^2-1 \leq \left(1 + \tfrac{22002}{11000} e^{-\pi \alpha}\right)^2-1 < 2 \, e^{-\frac{2 \pi \alpha}{3}}.
\end{equation}
By the change of variable $x=e^{-\frac{\pi \alpha}{3}}$, this reduces to showing for all $x\in(0,e^{-\frac{\pi}{3}}]$
\begin{align}
    0> \left(1 + \tfrac{22002}{11000} \, x^3\right)^2-1-2 x^2 = \tfrac{22002^2}{11000^2} \, x^2\left(x^4 +2\cdot \tfrac{11000}{22002}x - 2\cdot \tfrac{11000^2}{22002^2}\right).
\end{align}
Note that the quartic function in brackets is strictly monotonically increasing on the interval $(0,e^{-\frac{\pi}{3}}]\subseteq (0,0.4)$, so that we can estimate as follows:
\begin{equation}
x^4 +2\cdot \tfrac{11000}{22002}x - 2\cdot \tfrac{11000^2}{22002^2} \leq 0.4^4 +2\cdot \tfrac{11000}{22002}\cdot 0.4 - 2\cdot \tfrac{11000^2}{22002^2}< 0.4^4+0.4-0.5<0.
\end{equation}
This proves the claim for $\alpha\geq 1$. Since all lattices of unit density in $\R^2$ are symplectic and as 2-dimensional Gaussians are eigenfunctions of the symplectic Fourier transform (cf.\ \cite[Sec.~2.2]{Fau18}), the case $\alpha<1$ follows by the symplectic Poisson summation formula. Combining the cases then gives the full result for all $\alpha > 0$.
\end{proof}

\bibliographystyle{plain}

%\bibliographystyle{plain}
%\bibliography{mybib}

\begin{thebibliography}{10}

\bibitem{Bae97}
A.\ {Baernstein II}.
\newblock {A minimum problem for heat kernels of flat tori}.
\newblock In {\em {Extremal {R}iemann surfaces ({S}an {F}rancisco, {CA},
  1995)}}, volume 201 of {\em {Contemporary Mathematics}}, page 227–243.
  American Mathematical Society, Providence, RI, 1997.

\bibitem{Ber28}
S.~N.\ Bernstein.
\newblock {Sur les fonctions absolument monotones}.
\newblock {\em Acta Mathematica}, 52:1–66, 1928.

\bibitem{BorBor91}
J.~M.\ Borwein and P.~B.\ Borwein.
\newblock {A Cubic Counterpart of Jacobi's Identity and the AGM}.
\newblock {\em Transactions of the American Mathematical Society},
  332(2):691–701, 1991.

\bibitem{BetFau23}
L.\ Bétermin and M.\ Faulhuber.
\newblock {Maximal theta functions -- Universal optimality of the hexagonal
  lattice for Madelung-like lattice energies}.
\newblock {\em Journal d'Analyse Math\'ematique}, 149:307--341, 2023.

\bibitem{BetFauSte21}
L.\ Bétermin, M.\ Faulhuber, and S.\ Steinerberger.
\newblock {A variational principle for Gaussian lattice sums}.
\newblock {\em arXiv preprint}, 2110.06008, 2021.

\bibitem{BetPet17}
L.\ Bétermin and M.\ Petrache.
\newblock Dimension reduction techniques for the minimization of theta
  functions on lattices.
\newblock {\em Journal of Mathematical Physics}, 58:071902, 2017.

\bibitem{CohKum07}
H.\ Cohn and A.\ Kumar.
\newblock Universally optimal distribution of points on spheres.
\newblock {\em Journal of the American Mathematical Society}, 20(1):99–148,
  2007.

\bibitem{Coh-Via22}
H.\ Cohn, A.\ Kumar, S.~D.\ Miller, D.\ Radchenko, and M.~S.\ Viazovska.
\newblock {Universal optimality of $E_8$ and Leech lattices and interpolation
  formulas}.
\newblock {\em Annals of Mathematics}, 196(3):983–1082, 2022.

\bibitem{CohKumSch09}
H.\ Cohn, A.\ Kumar, and A.\ Schuermann.
\newblock {Ground states and formal duality relations in the Gaussian core
  model}.
\newblock {\em Physical Review E}, 80:061116, 2009.

\bibitem{ConSlo98}
J.~H.\ Conway and N.~J.~A.\ Sloane.
\newblock {\em {Sphere Packings, Lattices and Groups}}, volume 290 of {\em
  {Grundlehren der Mathematischen Wissenschaften [Fundamental Principles of
  Mathematical Sciences]}}.
\newblock Springer-Verlag, New York, 3. edition, 1998.

\bibitem{CouSch12}
R.\ Coulangeon and A.\ Schürmann.
\newblock {Energy minimization, periodic sets and spherical designs}.
\newblock {\em International Mathematics Research Notices}, 4:829–848, 2012.

\bibitem{CouSch22}
R.\ Coulangeon and A.\ Schürmann.
\newblock {Erratum: Energy Minimization, Periodic Sets, and Spherical Designs}.
\newblock {\em International Mathematics Research Notices}, 4:3198--3200, 2022.

\bibitem{Fau18}
M.\ Faulhuber.
\newblock A short note on the frame set of odd functions.
\newblock {\em Bulletin of the Australian Mathematical Society}, 98:481--493,
  2018.

\bibitem{FauGumSha22}
M.\ Faulhuber, A.\ Gumber, and I.\ Shafkulovska.
\newblock {The AGM of Gauss, Ramanujan's corresponding theory, and spectral
  bounds of self-adjoint operators}.
\newblock {\em arXiv preprint}, 2022.

\bibitem{FauSte17}
M.\ Faulhuber and S.\ Steinerberger.
\newblock {Optimal Gabor frame bounds for separable lattices and estimates for
  Jacobi theta functions}.
\newblock {\em Journal of Mathematical Analysis and Applications},
  445(1):407–422, 2017.

\bibitem{Jan96}
A.~J.~E.~M.\ Janssen.
\newblock {Some Weyl-Heisenberg frame bound calculations}.
\newblock {\em Indagationes Mathematicae}, 7(2): 165–183, 1996.

\bibitem{Mon88}
H.~L.\ Montgomery.
\newblock {Minimal theta functions}.
\newblock {\em Glasgow Mathematical Journal}, 30:75–85, 1988.

\bibitem{SteSha03_Complex}
E.\ Stein and R.\ Shakarchi.
\newblock {\em {Complex Analysis}}.
\newblock Princeton University Press, 2003.

\bibitem{WhiWat69}
E.~T.\ Whittaker and G.~N.\ Watson.
\newblock {\em {A Course of Modern Analysis}}.
\newblock Cambridge University Press, reprinted edition, 1969.

\bibitem{Wid41}
D.~V.\ Widder.
\newblock {\em {The Laplace Transform}}.
\newblock Princeton University Press, 1941.

\bibitem{Mathematica}
{Wolfram Research, Inc., Champaign, IL}.
\newblock {Mathematica, Version 12--13}, 2019--2021.

\end{thebibliography}

\end{document}